\newtheorem{theorem}{Theorem}[section]
\newtheorem{proposition}[theorem]{Proposition}
\newtheorem{lemma}[theorem]{Lemma}
\numberwithin{equation}{section}
\def\cl#1{{\langle #1 \rangle}}
\def\Ker{\mathop{\rm Ker}\nolimits}
\def\im{\mathop{\rm Im}\nolimits}
\def\Hom{{\mathrm {Hom}}}
\newcommand\ZZ{{\mathbb{Z}}}
\newcommand\NN{{\mathbb{N}}}
\newcommand\KK{\mathbb{K}}
\title{Hochschild homology and cohomology of down-up algebras}
\author{Sergio Chouhy, Estanislao Herscovich, and Andrea Solotar
\thanks{{\footnotesize This work has been supported by the projects  
UBACYT 20020130100533BA, UBACYT 20020130200169BA, PIP-CONICET 11220150100483CO, and MATHAMSUD-REPHOMOL.
The second and third authors are research members of CONICET (Argentina).}}
}
\date{}
\begin{document}

\maketitle

\begin{abstract}
 {
We present a detailed computation of the cyclic and the Hochschild homology and 
cohomology of generic and $3$-Calabi-Yau homogeneous down-up algebras. 
This family was defined by Benkart and Roby in \cite{BR98} in their study of 
differential posets. 
Our calculations are completely explicit, by making use of the Koszul bimodule resolution 
and arguments similar to those appearing in \cite{HS12}.
}
\end{abstract}

\textbf{2010 Mathematics Subject Classification: 16E40, 16E05, 16W50.}

\textbf{Keywords:} Down-up algebra, homology, Hochschild, resolution.

%%%%%%%%%%%%%%%%%%%%%%%%%%%%%%%%%%%%%%%%%%%%%%%%%%%%%%%%%%%%%%%%%%%%%%%%%%%%%%%%%%%%%%%%%%%%%%%%%%%%%%%%%%%%%%%%%%%%%%%%%%%%%%%%%%%%%%%%%%%%%%%%

\section{Introduction}

Motivated by the study of the algebra generated by the up and down 
operators in the theory of \emph{differential} posets defined independently by R. Stanley 
in \cite{Sta88} and by S. Fomin in \cite{Fo86}, or of \emph{uniform} posets defined by P. 
Terwilliger in \cite{Ter90}, G. Benkart and T. Roby introduced in \cite{BR98} the notion 
of \emph{down-up algebras}.  
They have been intensively studied in \cite{B}, \cite{BW01}, \cite{CM00}, \cite{Ku01}, 
\cite{Zh99} among many other articles, 
and different kinds of generalizations have been defined \cite{CL09}, \cite{CS04}.
Since the homological invariants of an algebra provide useful tools for its description as well as for its representations, 
many of their homological properties were studied 
and in particular, a quite convenient projective resolution of the regular bimodule of a 
down-up algebra was constructed by S. Chouhy and A. Solotar in \cite{CS15}. 

Let $\KK$ be a fixed field of characteristic $0$.
Given parameters $(\alpha,\beta,\gamma) \in \KK^{3}$, the associated 
\emph{down-up algebra} $A(\alpha,\beta,\gamma)$ is defined as the quotient 
of the free associative algebra $\KK\cl{u,d}$ by the ideal generated by the relations
\begin{equation}
\label{eq:relationsdownup}
\begin{split}
d^{2} u - (\alpha d u d + \beta u d^{2} + \gamma d),
\\
d u^{2} - (\alpha u d u + \beta u^{2} d + \gamma u).
\end{split}
\end{equation}
We shall sometimes denote a particular down-up algebra $A(\alpha,\beta,\gamma)$ just by 
$A$ 
to simplify the notation.

As examples of down-up algebras, $A(2,-1,0)$ is isomorphic to the enveloping 
algebra of 
the Heisenberg-Lie algebra of dimension $3$, and, for $\gamma \neq 0$, $A(2,-1,\gamma)$ 
is 
isomorphic to the enveloping algebra of $\mathfrak{sl}(2,\KK)$.  
Moreover, Benkart proved in \cite{B} that any down-up algebra such that $(\alpha, \beta) 
\neq (0,0)$ is isomorphic to one of Witten's $7$-parameter deformations of 
$\mathscr{U}(sl(2,\KK))$. 

Any of these algebras has a PBW basis given by 
\begin{equation}
\label{eq:pbwbasis}
     \{ u^{i} (d u)^{j} d^{k} : i, j, k \in \NN_{\geq 0} \}.     
\end{equation}

Note that the down-up algebra $A(\alpha,\beta,\gamma)$ can be regarded as a $\ZZ$-graded 
algebra where the degrees of $u$ and $d$ are respectively $1$ and $-1$. 
We shall refer to this grading as \emph{special}, and denote the special degree of an 
element $a \in A$ 
by $\mathrm{s\text{-}deg}(a)$. 
In fact, $A=\bigoplus_{n\in \ZZ}A_n$ where $A_n$ is the $\KK$-vector space spanned by the 
set $\{u^i(du)^jd^k| i-k=n\}$.

It is known \cite{CM00} that if $A(\alpha,\beta,\gamma)$ is isomorphic to 
$A(\alpha',\beta',\gamma')$, then 
\begin{equation}
\label{eq:isoproblem}
\begin{split}
&\text{both } \alpha + \beta \text{ and } \alpha' + \beta' \text{ are $1$ or different 
from $1$}, 
\\
&\text{both } \gamma \text{ and } \gamma' \text{ are $0$ or different from $0$}. 
\end{split}
\end{equation}
The down-up algebra $A(\alpha,\beta,\gamma)$ is isomorphic to $A(\alpha,\beta,1)$ for all 
$\gamma \neq 0$. 
Furthermore, if $\KK$ is algebraically closed, P. Carvalho and I. Musson showed in 
\cite{CM00} that $A(\alpha,\beta,\gamma)$ is isomorphic to $A(\alpha',\beta',\gamma')$ if 
and only if the following conditions hold
\begin{equation}
\label{eq:isoproblemdos}
\begin{split}
&\text{either } \alpha' = \alpha \text{ and } \beta' = \beta, \text{ or } 
\alpha' = - \alpha^{-1} \beta \text{ and } \beta' = \beta^{-1}, 
\\
&\text{both } \gamma \text{ and } \gamma' \text{ are $0$ or different from $0$}. 
\end{split}
\end{equation}

E. Kirkman, I. Musson and D. Passman proved in \cite{KMP99} that $A(\alpha,\beta,\gamma)$ 
is noetherian if and only it is a domain, which is tantamount to the fact that the 
subalgebra of $A(\alpha,\beta,\gamma)$ generated by $u d$ and $d u$ is a polynomial 
algebra in two indeterminates, that in turn is equivalent to $\beta \neq 0$. 
Under either of the previous situations, $A(\alpha,\beta,\gamma)$ is Auslander regular 
and 
its global dimension is $3$.
On the other hand, it was proved by Cassidy and Shelton in \cite{CS04} than, if $\KK$ is 
algebraically closed, then the global dimension of $A(\alpha,\beta,\gamma)$ is always 
$3$. 
Moreover, Benkart and Roby proved in \cite{BR98} that the Gelfand-Kirillov dimension of a 
down-up algebra is $3$, independently of the parameters.
Since $A(\alpha,\beta,\gamma)$ is isomorphic to the opposite algebra, left and right 
dimensions coincide.

The centre of a down-up algebra has been computed in \cite{Ku01} and \cite{Zh99}, and the 
first Hochschild cohomology space of a localization of some families of down-up algebras 
with $\gamma=0$ has been recently computed in \cite{Ta15}, but up to now there is no 
description of Hochschild homology and cohomology of down-up algebras available.

The main result of this article is the computation of the complete Hochschild homology 
and cohomology of two families of down-up algebras with $\gamma=0$. Given 
$\alpha,\beta\in\KK$, denote $r_1$ and $r_2$ the roots of the polynomial $t^2 - \alpha t 
-\beta$.
We define the following two cases.
\begin{itemize}
 \item[(F1)] \textit{Graded generic down-up algebras}.
The algebra $A(\alpha,\beta,0)$ belongs to this family if and only if 
$(\alpha,\beta)\neq(0,0)$ and  $r_1^ir_2^j\neq1$  for all $i$ and $ j$ such that 
$(i,j)\neq(0,0)$. We call this assumption the \textit{genericity 
hypothesis}.
 \item[(F2)] \textit{Graded $3$-Calabi-Yau down-up algebras}. The algebra 
$A(\alpha,\beta,0)$ belongs to this family if and only if $\beta=-1$,
in which case $r_2=r_1^{-1}$.
\end{itemize}
The methods we use are closely related to those used for the computation of the Hochschild 
and cyclic 
(co)homology of Yang-Mills algebras in \cite{HS12}, and we think that they will lead to 
the 
computation of these invariants for the other cases as well, with more 
involved calculations.
We are not studying here the case $A(0,0,0)$ for which the resolution constructed by M. Bardzell in \cite{Ba97} 
is available.

In Section \ref{sec:main} we introduce some notations and basic objects such as the 
projective resolution  of $A$ as $A$-bimodule. In case $\gamma=0$, this is the Koszul 
resolution. We state the main results of the article in Theorem \ref{thm:homology} and 
Theorem \ref{thm:cohomology}, and leave the proofs for the subsequent sections. 

In Section \ref{sec:homology} we compute Hochschild and cyclic homology. It is clear from 
the resolution that $HH_i(A)=0$ for all $i\geq4$. We provide explicit bases for $HH_0(A)$ 
and $HH_3(A)$ and we use a Hilbert series argument involving reduced cyclic homology and 
a theorem by K. Igusa in \cite{Ig92} to obtain the Hilbert series of $HH_1(A)$ and $HH_2(A)$.

Section \ref{sec:cohomology} is devoted to the Hochschild cohomology. Since $A(\alpha,-1,0)$ 
is $3$-Calabi-Yau, we only study here algebras belonging to the family (F1). 
It is well known that their centre is $\KK$ (see \cites{Ku01,Zh99}). 
We give bases of $HH^1(A)$, $HH^2(A)$ and $HH^3(A)$. 
This may be particularly useful for the description of the corresponding deformations.

\section{Main results}
\label{sec:main}
In this section we will introduce some elements of down-up algebras with the aim of 
stating the main results of the article, that will be proved in the sequel.

As stated previously, we will usually denote $A(\alpha,\beta,\gamma)$ simply by $A$. 
We mentioned in the introduction that this algebra can be regarded as a $\ZZ$-graded 
algebra where the degrees of $u$ and $d$ 
are, $1$ and $-1$, respectively. 
We shall refer to this grading as \emph{special}. 
If $\gamma$ is zero, the algebra $A$ has another grading that we will call \textit{usual} 
with $u$ and $d$ both in 
degree $1$. 
We shall denote the usual degree of an element $a \in A$ by $\mathrm{deg}(a)$. 
Notice that the homogeneous components with respect to the usual grading are finite 
dimensional $\KK$-vector spaces. 
For $\gamma=0$, $A$ is thus $\ZZ^2$-graded by $\mathrm{bideg} \colonequals 
(\mathrm{deg},\mathrm{s\text{-}deg})$.

Let $V$ be the $\KK$-vector space spanned by the set $\{d,u\}$ and let 
$T(V)=\oplus_{n\geq0}V^{\otimes n}$ be the tensor algebra
of $V$ over $\KK$. 
We will typically omit the tensor product symbols when writing an 
element of $T(V)$. 
Let $R$ be the subspace of $V^{\otimes3}$ spanned by the set $\{d^2u,du^2\}$ and let 
$\Omega$ be the subspace of $V^{\otimes4}$ spanned by the element $d^2u^2$.

There is a short projective resolution of $A$ as $A$-bimodule (see \cite{CS15}):
\begin{align}
\label{resolucion}
     0 \rightarrow A \otimes \Omega \otimes A 
           \overset{\delta_{3}}{\rightarrow} A \otimes R \otimes A 
           \overset{\delta_{2}}{\rightarrow} A \otimes V \otimes A 
           \overset{\delta_{1}}{\rightarrow} A \otimes A 
           \rightarrow 0,
\end{align}
where the augmentation $\delta_{0}:A\otimes A\to A$ is given by the multiplication map. 
The differentials are
\begin{equation}
\label{eq:delta2}
\begin{split}
\delta_{1}(1\otimes v\otimes 1) &=  v \otimes 1 - 1 \otimes v, \hskip0.5cm \text{for all 
}v\in V,\\
\delta_{2}(1 \otimes d^{2} u \otimes 1) &= 1 \otimes d \otimes d u +  d \otimes d 
\otimes u 
+ d^{2} \otimes u \otimes 1 
\\
&- \alpha (1 \otimes d \otimes u d + d \otimes u \otimes d  + d u \otimes d 
\otimes 1)
\\
&- \beta (1\otimes u \otimes d^{2} + u \otimes d \otimes d  + u d \otimes d 
\otimes 1)
\\
&- \gamma \otimes d \otimes 1,
\end{split}
\end{equation}
\begin{equation}
\label{eq:delta2bis}
\begin{split}
\delta_{2}(1 \otimes d u^{2} \otimes 1) &= 1 \otimes d \otimes u^{2}  + d \otimes u 
\otimes u  
+ d u \otimes u \otimes 1 
\\
&- \alpha (1\otimes u \otimes d u  +  u \otimes d \otimes u  +  u d \otimes u 
\otimes 1)
\\
&- \beta (1\otimes u \otimes u d  + u \otimes u \otimes d  +  u^{2} \otimes d 
\otimes 1)
\\
&- \gamma  \otimes u \otimes 1,
\end{split}
\end{equation}
and 
\begin{equation}
\label{eq:delta3}
\begin{split}
\delta_{3}(1 \otimes d^{2} u^{2} \otimes 1) &= d \otimes d u^{2} \otimes 1 + \beta 
\otimes d u^{2} \otimes d  
\\
&-1 \otimes d^{2} u \otimes u  - \beta u \otimes d^{2} u \otimes 1.
\end{split}
\end{equation} 
Notice that for all $i$ the map $\delta_i$ is homogeneous for the special 
degree and the same holds for the usual degree when $\gamma=0$. 
Moreover, the projective resolution \eqref{resolucion} is minimal in the category of 
graded modules if $\gamma=0$.

We will denote the complex  \eqref{resolucion} by $K$. It is not hard to see that if 
$\beta\neq0$, then there is an isomorphism $\Hom_{A^e}(K,A^e) \cong 
A_\sigma\otimes_A K$ of complexes of $A$-bimodules, where 
$A^e=A\otimes A^{op}$ and $\sigma$ is the automorphism determined by 
$\sigma(u)=-\beta^{-1}u$ and 
$\sigma(d) =-\beta d$. As a consequence, noetherian down-up algebras are twisted 
$3$-Calabi-Yau, and if $\beta=-1$ they are $3$-Calabi-Yau. Moreover, the algebra 
$A(\alpha,\beta,\gamma)$ is $3$-Calabi-Yau if and only if $\beta=-1$. See for example 
\cite{SL16} and the references therein.

In the following theorem we summarize the results about Hochschild homology.
\begin{theorem}
\label{thm:homology}
 Let $A=A(\alpha,\beta,0)$ be a down-up algebra. Define
\begin{align*}
 &s_1 = \frac{t (2+3 t)}{1-t^2},
 &
 &s_2 = \frac{t^2}{1-t^4},
\end{align*}
and for $n\geq1$,
\begin{align*}
 &f_n = \frac{1}{(1-t^4)(1-t^n)^2},
 &
 &g_n = \frac{t^2-t^{2n}}{1-t^4},
 &
 &h_n = \frac{2t(1 - t^{n-1})}{(1-t)(1-t^n)}.
 \end{align*}
 
\begin{itemize}
 \item If $A$ belongs to (F1), then the Hilbert series of the Hochschild homology spaces 
$HH_0(A)$ and $HH_1(A)$ are the following, while $HH_2(A)$ and $HH_3(A)$ vanish.
\begin{align*}
 &HH_0(A)(t)=\frac{1+2t+2t^2}{1-t^2},
 &
 &HH_1(A)(t)=\frac{t(2+3t)}{1-t^2},
 \end{align*}	
 \item If $A$ belongs to the family (F2) and $r_1$ is not a root of unity, then the 
Hilbert series are as follows.
\begin{align*}
 &HH_0(A)(t)=\frac{1+2t+2t^2}{1-t^2},
 &
 &HH_1(A)(t)= \frac{t (2+3 t)}{1-t^2} + \frac{t^4}{1-t^8},
 \\
 &HH_2(A)(t)= \frac{2t^4}{1-t^8},
 &
 &HH_3(A)(t)=\frac{t^4}{1-t^8}.
\end{align*}

\item If $A$ belongs to (F2) and $r_1$ is a primitive $n$-th root of unity, then
\begin{enumerate}[i)]
 \item For $n$ even and $n\geq4$,
 \begin{align*}
 &HH_0(A)(t)= f_n + h_n + s_2,\\
 &HH_1(A)(t)= \frac{t^4}{(1-t^4)(1-t^{n})^2} + 2(f_n + h_n + s_2-1)- s_1,\\
 &HH_2(A)(t)=\frac{2t^4}{(1-t^4)(1-t^{n})^2}+f_n + h_n + s_2-s_1 -1, \\
 &HH_3(A)= \frac{t^4}{(1-t^4)(1-t^{n})^2}.
 \end{align*}
 \item For $n$ odd and $n\geq3$,
 \begin{align*}
  &HH_0(A)(t) = f_n + g_n + h_n,\\
  &HH_1(A)(t) = \frac{t^4}{(1-t^4)(1-t^{n})^2} + 2(f_n + g_n + h_n-1)-s_1, \\
  &HH_2(A)(t) = \frac{2t^4}{(1-t^4)(1-t^{n})^2} + f_n + g_n + h_n-s_1-1,\\
  &HH_3(A)(t) = \frac{t^4}{(1-t^4)(1-t^{n})^2}.
 \end{align*}
 \item For $n=2$, that is $r_1=-1$, the Hilbert series of the Hochschild homology spaces 
 of $A(2,-1,0)$ are
 \begin{align*}
 &HH_0(A)(t)=\frac{1+2t+2t^2 - t^4 - 2t^5}{(1-t^2)^2(1+t^2)},
 &
 &HH_1(A)(t)=\frac{2t+3t^2+t^4-2t^5}{(1-t^2)^2(1+t^2)},\\
 &HH_2(A)(t)=\frac{2t^4}{(1-t^2)^2(1+t^2)},
 &
 &HH_3(A)(t)=\frac{t^4}{1-t^4}.
 \end{align*}
 \item For $n=1$, that is $r_1=1$, the Hilbert series of the Hochschild homology spaces 
 of $A(2,1,0)$ are
 \begin{align*}
 &HH_0(A)(t)=\frac{1}{(1-t)^2},
 &
 &HH_1(A)(t)=\frac{t(2-t)(1+t^2)}{(1-t)^2},\\
 &HH_2(A)(t)=\frac{2t^3(1+t-t^2)}{(1-t^2)(1-t)},
 &
 &HH_3(A)(t)=\frac{t^4}{1-t^2}.
\end{align*}
\end{enumerate}
\end{itemize}
\end{theorem}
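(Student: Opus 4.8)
The plan is to compute the Hochschild homology directly from the Koszul bimodule resolution \eqref{resolucion}, which for $\gamma=0$ is minimal in the graded category. First I would apply the functor $A \otimes_{A^e} \place$ to the complex $K$, obtaining a complex of $\KK$-vector spaces whose terms are $A \otimes \Omega$, $A \otimes R$, $A \otimes V$, and $A$ (using that $A \otimes_{A^e}(A \otimes W \otimes A) \cong A \otimes W$). The induced differentials $\bar{\delta}_i$ are obtained from \eqref{eq:delta2}, \eqref{eq:delta2bis}, \eqref{eq:delta3} by replacing each expression $a \otimes w \otimes b$ with $ba \otimes w$, i.e. passing to the quotient by commutators. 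Since all the $\delta_i$ are bihomogeneous, the resulting complex splits as a direct sum over the bidegree $\ZZ^2$-grading, and the whole computation reduces to finite-dimensional linear algebra in each bidegree. The graded Euler characteristic then gives one universal relation among the four Hilbert series, namely
\begin{equation*}
HH_0(A)(t) - HH_1(A)(t) + HH_2(A)(t) - HH_3(A)(t) = \chi_K(t),
\end{equation*}
where $\chi_K(t)$ is read off from the ranks of the terms of $K$ and the PBW basis \eqref{eq:pbwbasis}; this is an identity that every case of the theorem must satisfy and which I would use as a consistency check.

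Next I would pin down the two extreme groups explicitly, as the introduction announces. The group $HH_0(A) = A/[A,A]$ is the cokernel of $\bar\delta_1$, and $HH_3(A) = \Ker \bar\delta_3$; in the Calabi-Yau case (F2) the isomorphism $\Hom_{A^e}(K,A^e) \cong A_\sigma \otimes_A K$ with $\sigma = \mathrm{id}$ identifies $HH_3(A)$ with $HH_0(A)$ up to a degree shift, which for $\beta=-1$ fixes $HH_3$ once $HH_0$ is known. Computing $HH_0$ amounts to determining a $\KK$-basis of $A$ modulo commutators; using the PBW basis and the special grading I would single out the classes that survive, and the genericity hypothesis $r_1^i r_2^j \neq 1$ is precisely what guarantees, in family (F1), that almost all monomials become commutator-equivalent, collapsing $HH_0$ to the stated series $(1+2t+2t^2)/(1-t^2)$. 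In the root-of-unity subcases of (F2), extra invariant classes survive, which is the source of the additional terms built from $f_n, g_n, h_n$.

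For the two middle groups $HH_1$ and $HH_2$ I would follow the strategy borrowed from \cite{HS12}: rather than compute kernels and images of $\bar\delta_2$ by hand, I would exploit reduced cyclic homology together with Igusa's theorem (cited in the introduction) to get a second independent relation among the Hilbert series. Concretely, the long exact Connes sequence relating Hochschild homology, cyclic homology, and reduced cyclic homology, combined with the fact that for these graded algebras the relevant cyclic homology is computable from the Euler-characteristic-type data Igusa provides, yields enough linear equations to solve for $HH_1(A)(t)$ and $HH_2(A)(t)$ in terms of the already-determined $HH_0$ and $HH_3$ and the known $\chi_K(t)$. Having the Euler relation plus this cyclic/Igusa relation gives two equations for the two unknown series, which I would solve and then verify against the explicit formulas in each of the cases (F1), (F2)-generic, and the even/odd/$n=2$/$n=1$ root-of-unity cases.

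The main obstacle will be the bookkeeping in the root-of-unity subcases of (F2), where $r_1$ being a primitive $n$-th root of unity creates resonances: the center enlarges, extra semi-invariant monomials survive in the commutator quotient, and the clean collapse that happens under the genericity hypothesis fails. Here the Hilbert series $f_n$, $g_n$, $h_n$ encode exactly these surviving families, and getting their multiplicities right, with the correct dependence on the parity of $n$ and the special low cases $n=1,2$, is the delicate part. I expect each such subcase to require separately identifying which bidegree components of $\bar\delta_2$ drop rank, and I would organize this by tracking the bidegree $(\mathrm{deg},\mathrm{s\text{-}deg})$ of basis elements and checking divisibility conditions of the form $r_1^k = 1$; the Euler-characteristic and cyclic-homology relations serve as the crucial cross-checks that the resulting series are internally consistent.
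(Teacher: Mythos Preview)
Your overall strategy matches the paper's almost exactly: tensor the Koszul resolution with $A$ over $A^e$, compute $HH_0$ and $HH_3$ explicitly, and then use the Goodwillie splitting of the Connes sequence in characteristic zero together with Igusa's formula for $\chi_{\overline{HC}_\bullet(A)}(t)$ to solve for $HH_1$ and $HH_2$ from $HH_0$ and $HH_3$.

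There is one genuine error, however. The $3$-Calabi-Yau duality for family (F2) gives $HH_3(A)[4] \cong HH^0(A)$, the \emph{center} of $A$, not $HH_0(A)=A/[A,A]$. These two spaces are quite different: already in the (F2) non-root-of-unity case one has $HH_0(A)(t)=(1+2t+2t^2)/(1-t^2)$ while $HH_3(A)(t)=t^4/(1-t^8)$, so no degree shift of your $HH_0$ computation will produce $HH_3$. The paper therefore computes $HH_3$ by a separate argument: for (F1) and for (F2) with $r_1$ not a root of unity it analyzes $\Ker d_3$ directly in each bidegree, and only in the root-of-unity subcases does it invoke the Calabi-Yau isomorphism $HH_3(A)[4]\cong HH^0(A)$ together with Kulkarni's description of the center. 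Once you replace your ``$HH_3\cong HH_0$'' step by one of these two routes, the rest of your outline goes through as written.
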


While proving this result we will also obtain the Hilbert series of the cyclic homology 
of $A$. Moreover, we give explicit bases of $HH_0(A)$ and $HH_3(A)$.

The computation of the Hilbert series of Hochschild cohomology spaces follows from the 
previous ones in the $3$-Calabi-Yau case, that is, for the family (F2). 
However, we want to describe what happens for an algebra $A$ in (F1). 
No formula involving cyclic homology and the respective Hilbert series is available in 
this context. So, we provide explicit basis for the Hochschild cohomology 
spaces in this case.

\begin{theorem}[see Section \ref{sec:cohomology} for the notation]
\label{thm:cohomology}
Let $A$ be a down-up algebra belonging to the family (F1).
The Hilbert series of the Hochschild cohomology spaces are as follows.
\begin{align*}
&HH^0(A)(t) = 1,
&
&HH^1(A)(t) = 2,\\
&HH^2(A)(t) = \frac{1}{t^2} + 2 + \frac{t^2}{1-t^2},
&
&HH^3(A)(t) = \frac{1}{t^4(1-t^2)}.
\end{align*}
Moreover, 
\begin{enumerate}[i)]
 \item $HH^0(A)=\KK$,
 \item the classes of the elements $D|d$ and $U|u$ form a basis of $HH^1(A)$,
 \item the classes of the elements $\{D^2U|w_1^kd + DU^2|uw_1^k: 
k\geq0\}\cup\{D^2U|ud^2 + DU^2|u^2d\}$ form a basis of $HH^2(A)$, and
 \item the classes of the elements $\{D^2U^2|w_1^j:j\geq0,\text{ and 
}j\neq2\}\cup\{D^2U^2| uw_1d\}$ form a basis of $HH^3(A)$.
\end{enumerate}

\end{theorem}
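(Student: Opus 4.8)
The plan is to compute $HH^{\bullet}(A)=\Ext^{\bullet}_{A^{e}}(A,A)$ by applying the functor $\Hom_{A^{e}}(\place,A)$ to the Koszul bimodule resolution \eqref{resolucion}. The standard adjunction $\Hom_{A^{e}}(A\otimes W\otimes A,A)\cong\Hom_{\KK}(W,A)$, applied for $W\in\{V,R,\Omega\}$, turns $K$ into the cochain complex
\[
A\xrightarrow{\,d^{0}\,}\Hom_{\KK}(V,A)\xrightarrow{\,d^{1}\,}\Hom_{\KK}(R,A)\xrightarrow{\,d^{2}\,}\Hom_{\KK}(\Omega,A)\to 0,
\]
whose cohomology is $HH^{\bullet}(A)$; since $\dim_{\KK}V=\dim_{\KK}R=2$ and $\dim_{\KK}\Omega=1$ this is a complex of shape $A\to A^{2}\to A^{2}\to A$. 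Each $d^{i}$ is the transpose of $\delta_{i+1}$ and is read off directly from \eqref{eq:delta2}, \eqref{eq:delta2bis} and \eqref{eq:delta3}. Concretely, identifying a $0$-cochain with $a\in A$ and expressing cochains through the bases $\{d,u\}$, $\{d^{2}u,du^{2}\}$, $\{d^{2}u^{2}\}$, one finds $d^{0}(a)=(da-ad,\,ua-au)$ and, for a $2$-cochain $\psi$ with $a=\psi(d^{2}u)$ and $b=\psi(du^{2})$,
\[
d^{2}(\psi)(d^{2}u^{2})=db+\beta bd-au-\beta ua .
\]

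Because $\gamma=0$, all the $\delta_{i}$, and hence all the $d^{i}$, are homogeneous for the bidegree $\mathrm{bideg}=(\mathrm{deg},\mathrm{s\text{-}deg})$. The whole complex therefore splits as a direct sum, over $\ZZ^{2}$, of complexes of finite-dimensional $\KK$-vector spaces, each finite-dimensional because the usual-degree components of $A$ are. This is what makes the computation tractable: every cohomology space is obtained bidegree by bidegree as a finite piece of linear algebra in the PBW basis \eqref{eq:pbwbasis}, and the reported Hilbert series are read off by recording the dimensions so produced. The degree-zero part gives at once $HH^{0}(A)=\Ker d^{0}=Z(A)$, which for algebras in (F1) is known to be $\KK$ (see \cites{Ku01,Zh99}); this settles item (i) and $HH^{0}(A)(t)=1$.

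For $HH^{1}(A)=\Ker d^{1}/\im d^{0}$ I would first identify the $1$-cocycles, namely the $\KK$-linear maps $V\to A$ that extend to derivations of $A$, and then quotient by the inner derivations $\im d^{0}$. The two outer derivations $D|d$ and $U|u$ are immediately cocycles, and the genericity hypothesis $r_{1}^{i}r_{2}^{j}\neq1$ for $(i,j)\neq(0,0)$ is exactly what forces every remaining cocycle to be a coboundary, so that $HH^{1}(A)$ collapses to these two classes in bidegree $(0,0)$, giving $HH^{1}(A)(t)=2$ and item (ii). The same arithmetic governs $HH^{3}(A)=\Hom_{\KK}(\Omega,A)/\im d^{2}\cong A/\im d^{2}$, the cokernel of the single map $(a,b)\mapsto db+\beta bd-au-\beta ua$: analysing this cokernel bidegree by bidegree, and using that $r_{1}r_{2}=-\beta\neq1$ in (F1), I would show that $\{D^{2}U^{2}|w_{1}^{j}:j\geq0,\ j\neq2\}\cup\{D^{2}U^{2}|uw_{1}d\}$ descends to a basis, whence $HH^{3}(A)(t)=1/(t^{4}(1-t^{2}))$, which is item (iv).

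I expect the real difficulty to lie in $HH^{2}(A)=\Ker d^{2}/\im d^{1}$, the only space that is a genuine two-sided subquotient. Here one must control simultaneously, and across infinitely many bidegrees, both which $2$-cochains are killed by $d^{2}$ and which of these lie in $\im d^{1}$, and the genericity hypothesis must be used carefully to rule out any unexpected surviving class. The delicate part is the bookkeeping of the low-degree contributions: isolating the single class in usual degree $-2$ (namely $D^{2}U|d+DU^{2}|u$, giving the $1/t^{2}$ term) and the two classes in usual degree $0$ — the expected $D^{2}U|w_{1}d+DU^{2}|uw_{1}$ together with the less evident extra class $D^{2}U|ud^{2}+DU^{2}|u^{2}d$ — while checking that for $k\geq2$ the family $D^{2}U|w_{1}^{k}d+DU^{2}|uw_{1}^{k}$ produces precisely the tail $t^{2}/(1-t^{2})$. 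Verifying that these representatives are cocycles, linearly independent modulo $\im d^{1}$, and exhaustive yields item (iii) and $HH^{2}(A)(t)=\tfrac{1}{t^{2}}+2+\tfrac{t^{2}}{1-t^{2}}$, completing the proof.
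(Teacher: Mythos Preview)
Your overall setup and the treatment of $HH^{0}$, $HH^{1}$, and $HH^{3}$ follow the same lines as the paper: apply $\Hom_{A^{e}}(\place,A)$ to the Koszul resolution, split by bidegree, and compute directly using the PBW basis and the genericity hypothesis. (The paper gives its own proof that the centre is $\KK$ rather than citing \cites{Ku01,Zh99}, but this is immaterial.)

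The substantive difference is in how $HH^{2}$ is handled. You correctly anticipate that the hard part is proving \emph{exhaustiveness}---that no cocycle survives beyond the listed representatives---and propose to do this by direct analysis of $\Ker d^{2}/\im d^{1}$ in every bidegree. The paper sidesteps this entirely: having already computed $HH^{0}$, $HH^{1}$, and $HH^{3}$, it takes the Euler--Poincar\'e characteristic of the cochain complex (which is $-t^{-4}$, immediate from \eqref{eq:HilbSerAtotal}) and solves
\[
-t^{-4} \;=\; HH^{0}(A)(t) - HH^{1}(A)(t) + HH^{2}(A)(t) - HH^{3}(A)(t)
\]
to obtain $HH^{2}(A)(t)=t^{-2}+2+t^{2}/(1-t^{2})$ for free. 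With the Hilbert series in hand, the paper then only needs to exhibit, in each relevant bidegree, the right number of cocycles and check that one of them is not a coboundary---a much lighter burden than your proposed two-sided analysis. In bidegree $(2k,0)$ with $k\geq 1$, for instance, the paper bounds $\dim\im d^{1}\leq k+1$ by a short lemma, exhibits $k+2$ explicit cocycles, and lets the known value $\dim HH^{2}=1$ pin both dimensions exactly.

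Your route is not wrong, and would work, but the Euler-characteristic shortcut is what turns the ``real difficulty'' you flag into a one-line computation.
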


From the previous results we remark that all Hochschild homology spaces are either infinite dimensional -- with finite dimensional graded components-- or zero.
The situation differs for Hochschild cohomology of algebras belonging to the family (F1), in which case the centre is as small as possible, 
that is one dimensional, and the first cohomology space has dimension $2$, containing just the two obvious derivations. 
The fact that the second and third cohomology spaces are infinite dimensional in all cases suggests that deformations of down-up algebras 
are quite complicated, but having explicit bases in case (F1) indicates that the deformation theory may be nonetheless tractable.

\section{Hochschild and cyclic homology of down-up algebras}
\label{sec:homology}

From now on we fix $\gamma=0$, and let $A=A(\alpha,\beta,0)$ be a down-up algebra 
with $(\alpha,\beta)$ different from $(0,0)$. 
In this section we assume that the field $\KK$ contains both roots of the polynomial 
$t^2-\alpha t-\beta$ and it is of characteristic 
zero.

Denote by $A(t,s)$ the Hilbert series of the bigraded algebra $A$.
Consider $\KK$ as a left $A$-module with the trivial action of $d$ and 
$u$. 
By computing the Euler-Poincar\'e characteristic of the exact complex 
$K\otimes_A\KK$ we obtain
\begin{equation}
\label{eq:HilbSerAtotal}
     A(t,s) = \frac{1}{1- t (s +s^{-1}) + t^{3} (s +s^{-1}) - t^{4}}.
\end{equation}
The Hilbert series for the usual grading is obtained by setting $s=1$ in the previous 
expression:
\begin{equation}
\label{eq:HilbSerA}
     A(t) = \frac{1}{(1-t^{2})(1-t)^{2}}.
\end{equation}

Next we describe a basis of $A$ as a $\KK$-vector space that will be useful for the computations. 
Denote $r_1$ and $r_2$ the roots of the polynomial $t^2 - \alpha t - \beta$. 
Since $(\alpha,\beta)\neq (0,0)$ we may assume that $r_1$ is not zero.
For $l\in\{1,2\}$ we define $w_l = \beta ud + r_l du$. It is straightforward that
\begin{equation}
\label{eq:permws}
\begin{split}
w_{l} u &= r_{l} u w_{l},
\\
d w_{l} &= r_{l} w_{l} d,
\end{split}
\end{equation}
for $l=1,2$. Given $p \in \ZZ_{\geq 0}$, denote 
\[     
  \phi_{p} = \sum_{i=0}^{p} r_1^{i} r_2^{p-i} = \frac{r_1^{p+1}-r_2^{p+1}}{r_1-r_2}.  
\]
The last expression only holds for $r_1\neq r_2$. We set $\phi_{-1} = 0$. 
The following identities are easily proved by induction. 
%%%%%%%
\begin{lemma}
\label{fact:comm}
For all $k\geq0$ the following equalities hold
\begin{equation}
\begin{split}
   d u^{k} &= \frac{\phi_{k-1}}{r_{1}} u^{k-1} w_{1}  + r_{2}^{k} u^{k} d,     
\\
   d^{k} u &= \frac{\phi_{k-1}}{r_{1}} w_{1} d^{k-1} + r_{2}^{k} u d^{k}.
\end{split}
\end{equation}
\end{lemma}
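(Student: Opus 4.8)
The plan is to prove both identities in Lemma \ref{fact:comm} by induction on $k$, using the commutation relations \eqref{eq:permws} together with the definition $w_1 = \beta u d + r_1 du$. Since the two statements are symmetric under the interchange $u \leftrightarrow d$ (which swaps the two relations in \eqref{eq:permws}), it suffices to carry out the argument carefully for the first identity $d u^{k} = \frac{\phi_{k-1}}{r_1} u^{k-1} w_1 + r_2^{k} u^{k} d$, and then note that the second follows by the same reasoning.

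\medskip

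First I would check the base cases. For $k=0$ the right-hand side reads $\frac{\phi_{-1}}{r_1} u^{-1} w_1 + r_2^0 u^0 d = 0 + d = d$, using our convention $\phi_{-1}=0$, which matches $d u^0 = d$. For $k=1$ the claim is $du = \frac{\phi_0}{r_1} w_1 + r_2 u d$; since $\phi_0 = 1$ this says $du = \frac{1}{r_1} w_1 + r_2 u d$, which I can verify directly by substituting $w_1 = \beta u d + r_1 d u$:
\begin{equation*}
\frac{1}{r_1} w_1 + r_2 u d = \frac{\beta}{r_1} u d + d u + r_2 u d = du + \Bigl(\frac{\beta}{r_1} + r_2\Bigr) u d.
\end{equation*}
Here the parenthesis vanishes because $r_1 r_2 = -\beta$ (Vieta for $t^2-\alpha t-\beta$), so $\frac{\beta}{r_1} = -r_2$, confirming the identity.

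\medskip

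For the inductive step, assume the formula holds for $k$ and compute $d u^{k+1} = (d u^{k}) u$. Substituting the inductive hypothesis gives $d u^{k+1} = \frac{\phi_{k-1}}{r_1} u^{k-1} w_1 u + r_2^{k} u^{k} (d u)$. Into the first summand I would insert the relation $w_1 u = r_1 u w_1$ from \eqref{eq:permws}, turning it into $\frac{\phi_{k-1}}{r_1} u^{k-1}(r_1 u w_1) = \phi_{k-1}\, u^{k} w_1$; into the second summand I would insert the $k=1$ identity $du = \frac{1}{r_1} w_1 + r_2 u d$ already established, giving $r_2^k u^k\bigl(\frac{1}{r_1} w_1 + r_2 u d\bigr) = \frac{r_2^k}{r_1} u^k w_1 + r_2^{k+1} u^{k+1} d$. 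Collecting the two $u^k w_1$ terms, the coefficient is $\phi_{k-1} + \frac{r_2^k}{r_1} = \frac{r_1 \phi_{k-1} + r_2^k}{r_1}$, and I would identify the numerator with $\phi_k$ using the recursion $\phi_k = r_1 \phi_{k-1} + r_2^k$, which is immediate from the definition $\phi_p = \sum_{i=0}^{p} r_1^i r_2^{p-i}$ (peel off the $i=0$ term and factor $r_1$ from the rest). This yields exactly $d u^{k+1} = \frac{\phi_k}{r_1} u^k w_1 + r_2^{k+1} u^{k+1} d$, completing the induction.

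\medskip

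The only genuinely delicate point is bookkeeping the coefficient recursion $\phi_k = r_1\phi_{k-1}+r_2^k$ and making sure it interacts correctly with the $r_2^k$ factor carried along from the inductive hypothesis; everything else is a mechanical application of \eqref{eq:permws}. The main obstacle, such as it is, lies in this algebraic consolidation and in handling the degenerate case $r_1 = r_2$ consistently, where the closed form $\phi_p = (r_1^{p+1}-r_2^{p+1})/(r_1-r_2)$ is unavailable but the sum definition $\phi_p = \sum_i r_1^i r_2^{p-i}$ and its recursion still hold verbatim, so the induction goes through unchanged.
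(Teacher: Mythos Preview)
Your proof is correct and follows exactly the approach the paper indicates: the paper merely states that ``the following identities are easily proved by induction'' without giving details, and your argument supplies precisely that induction, using \eqref{eq:permws}, the definition of $w_1$, Vieta's relation $r_1 r_2 = -\beta$, and the recursion $\phi_k = r_1\phi_{k-1} + r_2^k$.
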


For the proof of the following result we refer to \cite{Zh99}, Lemma 2.2. 
\begin{lemma}
Let $l\in\{1, 2\}$ and suppose $r_l$ is not zero. The set $\{ u^{i} w_{l}^{j} d^{k} : i, 
j, k \in \NN_{\geq 0} \}$ 
is a basis of $A$.
\end{lemma}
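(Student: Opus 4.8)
The plan is to show that the proposed set spans $A$ and then upgrade this to a basis by a dimension count against the PBW basis \eqref{eq:pbwbasis}. Both families are indexed by triples $(i,j,k)\in\NN_{\geq0}^{3}$, and since $w_{l}=\beta ud+r_{l}du$ is bihomogeneous of bidegree $(2,0)$, each monomial $u^{i}w_{l}^{j}d^{k}$ has the same bidegree $(i+2j+k,\,i-k)$ as the PBW monomial $u^{i}(du)^{j}d^{k}$. Hence it suffices to argue in a fixed bidegree $(m,n)$: there the set $\{(i,j,k): i+2j+k=m,\ i-k=n\}$ is finite (parametrizing by $k$, which ranges over a bounded interval), so both families have the same finite cardinality in that component. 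Since the PBW monomials form a basis, this common number equals $\dim_{\KK}A_{(m,n)}$, and therefore any spanning family of the same cardinality in $A_{(m,n)}$ is automatically linearly independent.

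First I would establish spanning. Let $M$ be the $\KK$-span of $\{u^{i}w_{l}^{j}d^{k}: i,j,k\geq0\}$. Since $A$ is generated by $u$ and $d$ and $1\in M$, it is enough to prove that $M$ is closed under left multiplication by $u$ and by $d$. Closure under $u$ is immediate, as $u\cdot u^{i}w_{l}^{j}d^{k}=u^{i+1}w_{l}^{j}d^{k}$. For $d$, the key input is the $w_{l}$-analogue of Lemma \ref{fact:comm}, namely $d u^{i}=\frac{\phi_{i-1}}{r_{l}}u^{i-1}w_{l}+r_{3-l}^{\,i}u^{i}d$, which one derives either by the symmetry $r_{1}\leftrightarrow r_{2}$, $w_{1}\leftrightarrow w_{2}$ or directly by induction using \eqref{eq:permws}; this is exactly where the hypothesis $r_{l}\neq0$ is used, in order to divide by $r_{l}$. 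Combining it with the iterated relation $d w_{l}^{j}=r_{l}^{\,j}w_{l}^{j}d$ coming from \eqref{eq:permws} gives
\[
 d\cdot u^{i}w_{l}^{j}d^{k}=\frac{\phi_{i-1}}{r_{l}}\,u^{i-1}w_{l}^{j+1}d^{k}+r_{3-l}^{\,i}r_{l}^{\,j}\,u^{i}w_{l}^{j}d^{k+1}\in M,
\]
so $M$ is a left ideal containing $1$, whence $M=A$.

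Finally, because every generator $u^{i}w_{l}^{j}d^{k}$ of $M$ is bihomogeneous, the equality $M=A$ refines to $M_{(m,n)}=A_{(m,n)}$ in each bidegree. By the cardinality computation above, the spanning set $\{u^{i}w_{l}^{j}d^{k}: i+2j+k=m,\ i-k=n\}$ has exactly $\dim_{\KK}A_{(m,n)}$ elements, hence is a basis of $A_{(m,n)}$; taking the union over all bidegrees yields the claim. I expect the main obstacle to be the closure computation under left multiplication by $d$: it requires the correct $w_{l}$-version of Lemma \ref{fact:comm} and careful bookkeeping of the scalars produced by \eqref{eq:permws}, and it is precisely there that the nonvanishing of $r_{l}$ enters. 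The reduction to a single bidegree and the dimension count are then routine.
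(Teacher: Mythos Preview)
Your argument is correct. The paper does not give its own proof of this lemma; it simply refers to \cite{Zh99}, Lemma~2.2. Your approach---checking closure of the span under left multiplication by $u$ and $d$ via the $w_{l}$-version of Lemma~\ref{fact:comm}, and then matching cardinalities with the PBW basis bidegree by bidegree---is a clean self-contained alternative. The only point worth making explicit is that the bidegree argument uses the usual grading, hence $\gamma=0$; this is the standing hypothesis of Section~\ref{sec:homology}, so it is fine in context, but it is the one place where your proof is narrower than the statement taken in isolation.
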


We denote $\overline{HC}_\bullet(A)$, $\overline{HH}_\bullet(A)$ 
and $\overline{HH}^\bullet(A)$ the reduced cyclic homology, the reduced Hochschild homology and 
the reduced Hochschild cohomology of $A$. Notice that the reduced Hochschild homology and cohomology spaces differ from the non reduced groups only in (co)homological degree zero.

\bigskip

Tensoring the resolution $K$ of $A$ given by \eqref{resolucion} by $A$ over $A^{e}$ we obtain the following complex, whose homology is 
isomorphic to the Hochschild homology of $A$:
\[     0 \rightarrow A \otimes \Omega
           \overset{d_{3}}{\rightarrow} A \otimes R 
           \overset{d_{2}}{\rightarrow} A \otimes V  
           \overset{d_{1}}{\rightarrow} A 
           \rightarrow 0,     \]
where $d_{1}(a \otimes d + a' \otimes u) = a d - d a + a' u - u a'$, 
\begin{equation}
\label{eq:d2}
\begin{split}
&d_{2}(a \otimes d^{2} u + a' \otimes d u^{2}) 
\\
&= \Big(d u a + u a d + u^{2} a' - \alpha (u d a + a d u + u a' u) 
- \beta (d a u + a u d + a' u^{2}) - \gamma a \Big) \otimes d 
\\
&+ \Big(a d^{2} + u a' d + a' d u - \alpha (d a d + d u a' + a' u d) 
- \beta (d^{2} a + u d a' + d a' u) - \gamma a' \Big) \otimes u,
\end{split}
\end{equation}
and 
\begin{equation}
\label{eq:d3}
d_{3}(a \otimes d^{2} u^{2}) = - (u a + \beta a u) \otimes d^{2} u + (a d + \beta d a) \otimes d u^{2}.
\end{equation}

Since the characteristic of $\KK$ is zero, a theorem by T. Goodwillie
(see \cite{Go85}, and the consequence indicated by M. Vigu\'e-Poirrier in \cite{VP91}) tells us that,  
for all $i \in \NN_{\geq 0}$ there are short exact sequences of graded 
vector spaces
\[     0 \rightarrow \overline{HC}_{i-1}(A) \rightarrow \overline{HH}_{i}(A) \rightarrow \overline{HC}_{i}(A) \rightarrow 0.     \]
Since $HH_i(A)=0$ for all $i\geq4$, we deduce that $\overline{HC}_i(A)=0$ for all 
$i\geq3$.

We recall that the the Euler-Poincar\'e characteristic of the reduced cyclic homology 
$\chi_{\overline{HC}_{\bullet}(A)}(t)$ of $A$ is defined as
\[     \chi_{\overline{HC}_{\bullet}(A)}(t) = \sum_{p \in \ZZ} (-1)^{p} 
\overline{HC}_{p}(A)(t) = \overline{HC_0}(A)(t) - \overline{HC_1}(A)(t) + 
\overline{HC_2}(A)(t).     \]
A result by K. Igusa (see \cite{Ig92}, Thm. 3.5, Equation (16)) 
--and proved by different methods in an unpublished work by C. L\"ofwall-- tells us that it satisfies the identity 
$     \chi_{\overline{HC}_{\bullet}(A)}(t) = \sum_{\ell \in \NN} 
\frac{\varphi(\ell)}{\ell} \log (A(t^{\ell})),     $
where $\varphi$ is the Euler's totient function. 
Using that $\sum_{d|n}\varphi(d)=n$ for 
all $n\in\NN$, we compute
\begin{align*}
\sum_{\ell\in\NN}\frac{\varphi(\ell)}{\ell}\log(1-t^\ell)=-\sum_{n\in\NN}(\sum_{d|n}\varphi(d))\frac{t^n}{n}
=-\frac{t}{1-t}.
\end{align*}
Hence, using Equation \ref{eq:HilbSerA} we obtain
\begin{align*}
\chi_{\overline{HC}_{\bullet}(A)}(t) = - \sum_{\ell \in \NN}\frac{\varphi(\ell)}{\ell} 
\log 
((1-t^{2 \ell})(1-t^{\ell})^{2}) = \frac{t (2+3 t)}{1-t^2}.
\end{align*}
Putting this all together we get
\begin{align}
\label{eq:hilbert_series}
\nonumber&\overline{HC}_0(A)(t) = \overline{HH}_0(A)(t)\\
\nonumber&\overline{HC}_1(A)(t) = \overline{HH}_0(A)(t) + HH_3(A)(t) - 
\frac{t(2+3t)}{1-t^2},\\
&\overline{HC}_2(A)(t) = HH_3(A)(t),\\
\nonumber&HH_1(A)(t) = 2\overline{HH}_0(A)(t) + HH_3(A)(t) - \frac{t(2+3t)}{1-t^2},\\
\nonumber&HH_2(A)(t) = \overline{HH}_0(A)(t) + 2HH_3(A)(t) - \frac{t(2+3t)}{1-t^2}.
\end{align}
The computation of $HH_0(A)$ and $HH_3(A)$ will thus provide us the dimensions of the 
graded components of the other spaces. 

\begin{proposition}
\label{proposition:hh0}
Let $A(\alpha,\beta,0)$ be a down-up algebra.
\begin{enumerate}
\item If $A$ belongs to (F1), then the vector space 
$HH_{0}(A)$ has a basis formed by the classes of the elements of the set 
\[\{ 1, w_{1}^{j}, d^{j}, u^{j} : j \in \NN \}.\]
\item If $A$ belongs to $(F2)$, define $n$ as the order of $r_1$ if it is a root of 
unity and  $0$ otherwise. The vector space $HH_0(A)$ has a basis formed by the 
classes of 
the following elements.

\begin{itemize}
 \item If $n$ is even and different from $2$,
 \begin{enumerate}[i)]
 \item $u^iw_1^jd^k$ such that $n$ divides $j-i$ and $j-k$,
 \item $u^i$, $d^k$ with $i,k\geq0$ such that $n\nmid i$ and $n\nmid k$, and
 \item $w_1^{j}$, where $j$ is any odd number.
 \end{enumerate}
 \item If $n$ is odd and different from $1$,
 \begin{enumerate}[i)]
 \item $u^iw_1^jd^k$ such that $n$ divides $j-i$ and $j-k$.
 \item $u^i$, $d^k$ with $i,k\geq0$ such that $n\nmid i$ and $n\nmid k$, and
 \item $w_1^{j}$, where $j$ is odd and $j\leq n-2$.
 \end{enumerate}
 
 \item If $n=2$,
 \begin{enumerate}[i)]
  \item[] $u^{2i}d^{2k}, u^{2i+1},d^{2k+1},w_1^{2j+1}$ with $i,j,k\geq0$.
 \end{enumerate}
\item If $n=1$,
 \begin{enumerate}[i)]
  \item[] $u^id^k$ with $i,j\geq0$.
 \end{enumerate}
\end{itemize}
\end{enumerate}
\end{proposition}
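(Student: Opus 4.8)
The plan is to compute $HH_0(A) = A/[A,A]$, where $[A,A]$ denotes the $\KK$-subspace spanned by all commutators $ab - ba$. By definition, $HH_0(A)$ is the cokernel of the map $d_1$ in the complex obtained by tensoring the resolution with $A$, so I would first identify $\im(d_1)$ as the subspace generated by $\{ad - da, au - ua : a \in A\}$. Working with the PBW-type basis $\{u^i w_1^j d^k\}$ furnished by the preceding lemma is essential here, since the commutation relations \eqref{eq:permws} and Lemma \ref{fact:comm} let me move $u$'s and $d$'s past $w_1$'s and each other at the cost of scalar factors $r_l$. The strategy is to produce enough explicit commutator relations to rewrite an arbitrary basis element modulo $[A,A]$ in terms of the proposed spanning set, and then to argue linear independence of the images.

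The key computational step is to understand, modulo commutators, how the basis monomials reduce. Using $ab \equiv ba$ in $HH_0$, I would compute $[u^i w_1^j d^k, \cdot]$ by cyclically permuting factors: for instance, commuting $d$ to the front or $u$ to the back and invoking \eqref{eq:permws} produces relations of the shape $u^i w_1^j d^k \equiv r^{(\ast)} u^{i'} w_1^{j} d^{k'}$, where the scalar is a monomial in $r_1, r_2$. Concretely, I expect that moving a $d$ past the whole word and back yields a relation forcing $u^i w_1^j d^k \equiv 0$ unless the scalar obstruction equals $1$; in the generic case (F1) this scalar is $r_1^a r_2^b$ with $(a,b) \neq (0,0)$ whenever the word is not a pure power, so the genericity hypothesis $r_1^i r_2^j \neq 1$ kills all mixed monomials and leaves exactly $1$, the pure powers $u^j$, $d^k$, and the $w_1^j$ (which are central-like up to the relations and survive). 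This directly yields the basis in part (1). For the $3$-Calabi-Yau family (F2), where $\beta = -1$ and $r_2 = r_1^{-1}$, the obstruction scalar becomes a power of $r_1$ alone, so it equals $1$ precisely when the exponent is divisible by the order $n$ of $r_1$; this is what produces the divisibility conditions $n \mid j-i$, $n \mid j-k$ and the congruence restrictions on the surviving $u^i$, $d^k$, $w_1^j$ in the various cases of part (2).

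For linear independence, I would pass to the bigrading. Each proposed basis element is bihomogeneous, and $d_1$ is homogeneous, so it suffices to check independence within each bidegree component, where everything is finite-dimensional. The cleanest route is a dimension count: I can compute the Hilbert series of $\im(d_1)$ (equivalently, rank of $d_1$ in each bidegree) from the explicit form of $d_1$ and the known Hilbert series $A(t,s)$ of \eqref{eq:HilbSerAtotal}, then verify that the generating function of the proposed basis matches $A(t,s) - \im(d_1)(t,s)$, specialized to the usual grading to recover the claimed series $\tfrac{1+2t+2t^2}{1-t^2}$ in the (F1) case. Matching Hilbert series forces the spanning set to be a basis once spanning is established.

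The main obstacle I anticipate is the bookkeeping in the root-of-unity subcases of (F2), especially the boundary phenomena: when $n$ is even versus odd, the surviving powers of $w_1$ are restricted differently (all odd $j$ versus only odd $j \leq n-2$), and the cases $n=1,2$ degenerate further because $r_1 = \pm 1$ collapses distinctions between the two commutation scalars. Handling these requires carefully tracking not just when the scalar obstruction is $1$ but also the secondary relations among the $w_1^j$ coming from Lemma \ref{fact:comm} (which expresses $d u^k$ and $d^k u$ in terms of $w_1$), since these can identify or annihilate classes that the primary obstruction analysis leaves ambiguous. I would isolate these degenerate cases and treat them by direct computation in low bidegrees, cross-checking against the specialized Hilbert series given in the statement.
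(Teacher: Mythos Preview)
Your spanning idea is on the right track and matches what the paper does: the commutator identities \eqref{eq:1sthh0}--\eqref{eq:2ndhh0} give two-term rewriting rules that reduce every $u^iw_1^jd^k$ to the candidate set. But your description of the mechanism is inaccurate. The relation $[u^iw_1^jd^k,d]$ does \emph{not} produce $u^iw_1^jd^k \equiv r^{(\ast)} u^{i'}w_1^{j}d^{k'}$ with the same $j$; it expresses $u^iw_1^jd^{k+1}$ as a multiple of $u^{i-1}w_1^{j+1}d^k$, shifting the $w_1$-exponent up. So there is no single ``scalar obstruction'' that equals $1$ or not; the reduction is a chain that terminates when one of the outer exponents hits zero, and the genericity hypothesis is what guarantees the leading coefficients $1 - r_1^jr_2^i$ never vanish along the way. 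This matters especially in (F2), where these coefficients \emph{can} vanish and the rewriting stalls, producing the extra surviving classes of type $n\mid j-i$, $n\mid j-k$.

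The real gap is your linear-independence argument. You propose to compute the Hilbert series of $\im(d_1)$ ``from the explicit form of $d_1$ and the known Hilbert series $A(t,s)$'' and then match dimensions. But knowing $A(t,s)$ tells you only the dimensions of the source and target of $d_1$, not its rank; the rank is exactly $\dim A - \dim HH_0(A)$ in each bidegree, which is what you are trying to determine. Nor can you recover it from the Euler characteristic of the complex, since that only gives the alternating sum $HH_0 - HH_1 + HH_2 - HH_3$. The paper confronts this directly: it proves two coefficient lemmas (Lemmas~\ref{lemma:hh0_1} and~\ref{lemma:ind_lineal_gamma0}) that analyse, for a hypothetical dependence relation $\sum \lambda_\gamma u^{\gamma_1}w_1^{\gamma_2}d^{\gamma_3} \in \im(d_1)$, the constraints forced on the coefficients by writing the element in the spanning set $\{f_{i,j,k},g_{i,j,k}\}$ of $\im(d_1)$. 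The key device is the explicit telescoping element $z_{i,j,k}$, which pins down exactly when a monomial $u^iw_1^jd^k$ can appear in $\im(d_1)$, and this is what separates the surviving $w_1^j$ in the odd/even root-of-unity subcases. Your proposal contains no substitute for this step, and without it the argument does not close.
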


%%%%%%%
Before we get to the proof of Proposition \ref{proposition:hh0} we need some definitions 
and auxiliary results.
Let $a = u^{i} w_{1}^{j} d^{k}$, where $i, j, k \in \NN_{\geq 0}$. Using Lemma 
\ref{fact:comm} we deduce that
\begin{equation}
\label{eq:1sthh0}     
 ad-da=(1 - r_{1}^{j} r_{2}^{i}) u^{i} w_{1}^{j} d^{k+1} - \frac{\phi_{i-1} }{r_{1}} 
u^{i-1} 
w_{1}^{j+1} d^{k},     
\end{equation}
and 
\begin{equation}
\label{eq:2ndhh0} 
    au-ua=  - (1 - r_{1}^{j} r_{2}^{k}) u^{i+1} w_{1}^{j} d^{k} + \frac{\phi_{k-1} 
}{r_{1}} 
u^{i} w_{1}^{j+1} d^{k-1}.     
\end{equation}
Define $f_{i-1,j+1,k} = ad-da$, and $g_{i,j+1,k-1}=ua-au$. Observe that $\im(d_1)$ is 
equal to the vector space spanned by the set 
\[
 \{f_{i,j,k}:i\geq-1,j\geq1,k\geq0\}\cup\{g_{i,j,k}:i\geq0,j\geq1,k\geq-1\}.
\]
Let us write $t_{i} = \phi_{i}/r_1$ and $s_{i,j}=1-r_1^jr_2^i$. Then 
$f_{i,j,k}=s_{i+1,j-1}u^{i+1}w_1^{j-1}d^{j+1} - t_{i}u^iw_1^jd^k$.

For $i,j,k$ with $t_{i}\neq0$ and $j\geq1$, let 
\[L_{i,j}\colonequals\max\{l: \text{ such that }0\leq l\leq j-1\text{ and 
}t_{i+l}\neq0\}\] 
and
\[
z_{i,j,k} \colonequals -\frac{1}{t_i}f_{i,j,k} - \sum_{l=1}^{L_{i,j}} 
\left(\frac{1}{t_{i}}\prod_{m=1}^{l}\frac{s_{i+m,j-m}}{t_{i+m}}\right)f_{i+l,j-l
, k+l },
\]
where we omit the second summand whenever $L_{i,j}=0$. In order to simplify notations, let
$L=L_{i,j}$. Notice that 
\[
 z_{i,j,k} = u^iw_1^jd^k - 
\left(\frac{s_{i+L+1,j-L-1}}{t_{i}}\prod_{m=1}^{L}\frac{s_{i+m,j-m}}{t_{i+m}}\right)u^{
i+L+1 } w_1^ { j-L-1 } d^ { k+L+1 } , 
\]
and that it belongs to $\im(d_1)$.
On the other hand, define
\[
\Gamma=\{(i,j,k)\in\NN_0^3: r_1^jr_2^i=1 \text{ or 
}k=0\}\cap\{(i,j,k)\in\NN_0^3:r_1^jr_2^k=1\text{ or 
}i=0\}.
\]
\begin{lemma}
\label{lemma:hh0_1}
 Let $i,j,k\geq0$ and let $x\in\im(d_1)$ be such that the coefficient of $u^iw_1^jd^k$ in 
$x$ is not zero. If $(i,j,k)\in\Gamma$, then $j\geq1$. If in addition $n|i-k$, then 
$t_i\neq0$. 
\end{lemma}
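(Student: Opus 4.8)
The plan is to reduce the statement to a finite, one-dimensional bookkeeping problem inside a single bidegree. Since the differential $d_1$ is homogeneous for the bidegree $(\mathrm{deg},\mathrm{s\text{-}deg})$, I may assume that $x$ is bihomogeneous of the same bidegree as the monomial $\mu = u^i w_1^j d^k$ and work entirely within that bigraded component. The first observation is that the monomials $u^{i'} w_1^{j'} d^{k'}$ of a fixed bidegree form a single chain $\mu_0,\mu_1,\dots,\mu_N$ indexed by the exponent $j'$ of $w_1$: because $w_1$ has usual degree $2$ and special degree $0$, fixing the bidegree and lowering $j'$ by one forces both the exponent of $u$ and that of $d$ to rise by one. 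Thus $\mu=\mu_j$ and $\mu_{j'}=u^{\,i+j-j'} w_1^{j'} d^{\,k+j-j'}$.

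Next I would read off, from the formula $f_{i',j',k'}=s_{i'+1,j'-1}\,u^{i'+1}w_1^{j'-1}d^{k'+1}-t_{i'}\,u^{i'}w_1^{j'}d^{k'}$ and from the analogous identity $g_{i',j',k'}=s_{k'+1,j'-1}\,u^{i'+1}w_1^{j'-1}d^{k'+1}-t_{k'}\,u^{i'}w_1^{j'}d^{k'}$ that is obtained from \eqref{eq:2ndhh0}, that every generator of $\im(d_1)$ is supported on two consecutive monomials of the chain. Hence exactly four generators can involve $\mu_j$: the pair $f_{i,j,k},g_{i,j,k}$ attached to the link $\mu_{j-1}$--$\mu_j$, in which $\mu_j$ carries the coefficients $-t_i$ and $-t_k$, and the pair $f_{i-1,j+1,k-1},g_{i-1,j+1,k-1}$ attached to the link $\mu_j$--$\mu_{j+1}$, in which $\mu_j$ carries the coefficients $s_{i,j}$ and $s_{k,j}$. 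Using that a coordinate functional is nonzero on a subspace exactly when it is nonzero on one of its spanning vectors, $\mu$ occurs with nonzero coefficient in some $x\in\im(d_1)$ if and only if one of these four coefficients is nonzero for a generator that actually lies in the spanning set. Imposing the membership constraints $i'\geq-1,\,k'\geq0$ for the $f$'s and $i'\geq0,\,k'\geq-1$ for the $g$'s (all with $j'\geq1$) yields precisely four alternatives: (E1) $j\geq1$ and $t_i\neq0$; (E2) $j\geq1$ and $t_k\neq0$; (E3) $k\geq1$ and $s_{i,j}\neq0$; (E4) $i\geq1$ and $s_{k,j}\neq0$.

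With this dictionary both assertions become short case checks against the definition of $\Gamma$, recalling that $s_{i,j}=0$ if and only if $r_1^j r_2^i=1$. For the first, I set $j=0$: then (E1) and (E2) fail outright, while (E3) would force $k\geq1$, hence $k\neq0$, so the first defining condition of $\Gamma$ gives $s_{i,0}=0$, contradicting $s_{i,0}\neq0$; symmetrically (E4) is excluded by the second condition of $\Gamma$. Thus no $x$ has $u^i d^k$ in its support, so $j\geq1$. For the second, assume in addition $n\mid i-k$ and, towards a contradiction, $t_i=0$. In family (F2) we have $r_2=r_1^{-1}$, so $t_i=0$ means $r_1^{2(i+1)}=1$; the congruence $n\mid i-k$ gives $r_1^{2i}=r_1^{2k}$, whence $r_1^{2(k+1)}=1$, that is $t_k=0$ (when $r_1=r_2=\pm1$ one has $t_\ell\neq0$ for all $\ell$, so the claim is vacuous). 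Hence (E1) and (E2) both fail, while (E3) and (E4) are again excluded by the two defining conditions of $\Gamma$ together with $k\geq1$, respectively $i\geq1$. This contradicts the hypothesis that $\mu$ occurs, so $t_i\neq0$.

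The delicate point is the second step: building the four-condition dictionary and, in particular, getting the support and membership conditions exactly right. The naive guess that both $s$-alternatives require $i,k\geq1$ is wrong. Since $\phi_{-1}=0$, the generator $f_{i-1,j+1,k-1}$ survives in the spanning set and still contributes the coefficient $s_{i,j}$ to $\mu_j$ even when $i=0$ (it then degenerates to a multiple of the single monomial $\mu_j$), so (E3) needs only $k\geq1$; symmetrically $g_{i-1,j+1,k-1}$ makes (E4) need only $i\geq1$. This asymmetry is exactly what lets the two defining conditions of $\Gamma$ match (E3) and (E4) one-to-one. Once these four alternatives are correctly isolated, both conclusions follow at once from the definition of $\Gamma$ and, for the second assertion, from the identity $r_2=r_1^{-1}$ valid in family (F2).
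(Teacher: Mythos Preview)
Your argument is correct and is essentially the paper's own proof in a more structured packaging: the paper writes a general element of $\im(d_1)$ as a combination of commutators $[u^aw_1^bd^c,d]$ and $[u^aw_1^bd^c,u]$, reads off that the coefficient of $u^iw_1^jd^k$ is $\epsilon_{i,j,k-1}s_{i,j}-\epsilon_{i+1,j-1,k}t_i-\mu_{i-1,j,k}s_{k,j}+\mu_{i,j-1,k+1}t_k$, and then uses $\Gamma$ and the extra hypotheses to kill terms exactly as your alternatives (E1)--(E4) do. One small point: your derivation of $t_i=0\Rightarrow t_k=0$ is phrased only for family (F2), but the lemma is also invoked for (F1); there $n=0$, so $n\mid i-k$ forces $i=k$ and the implication is trivial.
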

\begin{proof}
 Write $x= \sum_{a,b,c\geq0}(\epsilon_{a,b,c}[u^aw_1^bd^c,d] + 
\mu_{a,b,c}[u^aw_1^bd^c,u])$. The coefficient of $u^iw_1^jd^k$ in this expression is
\[
 \epsilon_{i,j,k-1}s_{i,j} - \epsilon_{i+1,j-1,k}t_{i} - 
\mu_{i-1,j,k}s_{k,j} + \mu_{i,j-1,k+1}t_{k},
\]
where elements with negative subindices are zero. If $(i,j,k)$ belongs 
to $\Gamma$, then this element is equal to $-\epsilon_{i+1,j-1,k}t_{i} + \mu_{i,j-1,k+1}t_{k}$. 
By hypothesis this is not zero. We deduce that $j\geq1$. If $n|i-k$, then $t_{k}=t_{i}$ 
and the last expression is equal to $(-\epsilon_{i+1,j-1,k} + 
\mu_{i,j-1,k+1})t_{i}$. Since this is not zero, we obtain $t_i\neq0$.
\end{proof}
Let $\Gamma_0$ be the set formed by the elements $(i,j,k)\in\Gamma$ such that
\begin{itemize}
 \item $n|i-k$, and
 \item $j=0$ or $t_i=0$ or $u^iw_1^jd^k \neq z_{i,j,k}$.
\end{itemize}
\begin{lemma}
\label{lemma:ind_lineal_gamma0}
 The set consisting of the classes in $HH_0(A)$ of the elements $u^iw_1^jd^k$ with 
$(i,j,k)\in\Gamma_0$ is linearly independent.
\end{lemma}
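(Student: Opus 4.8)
The plan is to show that any linear combination $x=\sum_{(i,j,k)\in\Gamma_0}\lambda_{i,j,k}u^iw_1^jd^k$ lying in $\im(d_1)$ must be trivial, i.e. that all $\lambda_{i,j,k}=0$, which is exactly linear independence in $HH_0(A)=A/\im(d_1)$. Since $d_1$ is homogeneous for the bigrading $(\mathrm{deg},\mathrm{s\text{-}deg})$, the subspace $\im(d_1)$ is bihomogeneous and it suffices to argue inside a fixed bidegree. First I would observe that, within a fixed bidegree, the basis elements $u^iw_1^jd^k$ are completely determined by the single parameter $j$: writing $i=I-j$ and $k=K-j$ with $I,K$ fixed by the bidegree, they form a finite chain $e_0,\dots,e_m$. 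Because the special degree $i-k=I-K$ is constant along the chain, either $n\mid(I-K)$ for every element of the chain or for none; in the latter case no element belongs to $\Gamma_0$ and there is nothing to prove, so I assume $n\mid(I-K)$.

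The key simplification in such a chain is that the two families of relations collapse to one. Using $r_2=r_1^{-1}$ together with the formulas for $s_{i,j}=1-r_1^jr_2^i$ and $\phi_p$, a direct computation gives $s_{i,j}=s_{k,j}$ and $t_i=t_k$ whenever $n\mid i-k$; consequently the relations contributed by $\{f_{\cdot}\}$ and by $\{g_{\cdot}\}$ coincide in this bidegree, and $\im(d_1)$ restricted to the chain is spanned by the single bidiagonal family $v_p=a_pe_{p-1}-b_pe_p$ for $1\le p\le m$, with $a_p=s_{i(p-1),p-1}$ and $b_p=t_{i(p)}$. The vanishing loci of $a_p$ and $b_p$ are periodic in $p$ modulo $n$, being governed by $n\mid 2(p-1)-I$ and $n\mid 2(I-p+1)$ respectively, and these are precisely the congruences entering the definitions of $\Gamma$ and of $\Gamma_0$.

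Now I would compare coefficients. Writing $x=\sum_p\nu_pv_p$ and reading off the coefficient of each $e_p$, the hypothesis that $x$ is supported on the $\Gamma_0$-levels becomes the recursion $\nu_{p+1}a_{p+1}-\nu_pb_p=0$ at every level $p\notin\Gamma_0$ (with the conventions $\nu_{m+1}=0$ and no term $\nu_0b_0$), while at a $\Gamma_0$-level it reads $\lambda_p=\nu_{p+1}a_{p+1}-\nu_pb_p$. The terminal $\Gamma_0$-levels, those with $j=0$ or $t_i=0$, are disposed of at once by Lemma \ref{lemma:hh0_1}: for such a triple the coefficient of $u^iw_1^jd^k$ in any element of $\im(d_1)$ is forced to vanish, so the corresponding $\lambda$ is zero. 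For the remaining type-C levels, where $j\ge1$, $t_i\ne0$ and $u^iw_1^jd^k\ne z_{i,j,k}$, I would subtract $\sum\lambda_{i,j,k}z_{i,j,k}\in\im(d_1)$; this replaces each type-C monomial by the nonzero multiple $c_{i,j,k}$ of its terminal reduction $u^{i+L+1}w_1^{j-L-1}d^{k+L+1}$, and propagating the recursion through the intervening non-$\Gamma_0$ levels shows these reductions hit distinct monomials lying outside $\Gamma$, which survive independently in the cokernel. This forces every $\lambda_{i,j,k}=0$.

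The main obstacle is exactly the bookkeeping of the last paragraph: tracking, modulo $n$, the interlocking vanishing patterns of $a_p$ and $b_p$ (equivalently of $s_{i,j}$ and $t_i$) and matching the surviving levels of the bidiagonal recursion with the three clauses defining $\Gamma_0$. The parity of $n$ changes the solution set of $2p\equiv I\pmod n$, and the degenerate small cases $n=1$ and $n=2$ behave differently again, so each must be treated separately; the delicate point throughout is to check that distinct type-C levels reduce, via the $z_{i,j,k}$, to distinct monomials outside $\Gamma$, so that no nontrivial relation among the $\Gamma_0$-classes can survive.
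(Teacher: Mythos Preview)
Your setup matches the paper exactly: reduce to a fixed bidegree, observe that when $n\mid i-k$ the relations $f_{i,j,k}$ and $g_{i,j,k}$ coincide (the paper states this as ``it is easy to check that $f_{i,j,k}=g_{i,j,k}$''), so that $\im(d_1)$ restricted to the chain is spanned by a single bidiagonal family, and then invoke Lemma~\ref{lemma:hh0_1} to dispose of the $\Gamma_0$-levels with $j=0$ or $t_i=0$. (One small remark: your verification of $f=g$ via $r_2=r_1^{-1}$ is specific to family~(F2); in family~(F1) the hypothesis $n\mid i-k$ simply means $i=k$ and the identity is trivial.)

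The gap is in your treatment of the type-C levels. Subtracting $\sum\lambda\,z_{i,j,k}$ replaces each type-C monomial by its terminal reduction $u^{i+L+1}w_1^{j-L-1}d^{k+L+1}$, but your claim that these terminals ``survive independently in the cokernel'' because they lie \emph{outside} $\Gamma$ is backwards: elements outside $\Gamma$ are precisely the ones that admit further reductions, so being outside $\Gamma$ gives no independence in $HH_0(A)$. Whether the terminal monomials are linearly independent modulo $\im(d_1)$ is exactly as hard as the original statement, and you have not established it; nor have you checked that distinct type-C levels in the same chain reduce to distinct terminals.

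The paper avoids this detour entirely. Rather than subtracting the $z$'s, it picks a single $(a,b,c)\in\Gamma'$ and runs the recursion you already set up directly on the coefficients $\mu_{i,j,k}$ of the $f$'s. From the coefficient of $u^aw_1^bd^c$ one gets $\mu_{a,b,c}=-\lambda_{a,b,c}/t_a\neq 0$. The key observation is that the type-C condition $u^aw_1^bd^c\neq z_{a,b,c}$ is equivalent to $s_{a+m,b-m}\neq 0$ for every $m=1,\dots,L+1$; this forces each intermediate triple $(a+m,b-m,c+m)$ to lie outside $\Gamma$ (not merely outside $\Gamma_0$), so its coefficient on the left-hand side vanishes and the recursion $\mu_{a+l,b-l,c+l}=\mu_{a+l-1,b-l+1,c+l-1}\,s_{a+l,b-l}/t_{a+l}$ propagates unobstructed for $1\le l\le L$. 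At step $L+1$ either $b-L-1=0$ or $t_{a+L+1}=0$, so the coefficient equation collapses to $\mu_{a+L,b-L,c+L}\,s_{a+L+1,b-L-1}=0$, contradicting the nonvanishing of both factors. This argument handles one element at a time and never needs to compare terminals of different type-C levels, which is why the ``distinctness'' bookkeeping you flag as delicate simply does not arise.
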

\begin{proof}
% \begin{lemma}
Let $\Gamma' \subseteq \Gamma_0$ be a finite set and let $\lambda_\gamma\in\KK^\times$, 
with $\gamma\in\Gamma'$, be such that $\sum_{\gamma\in\Gamma'}\lambda_\gamma 
u^{\gamma_1}w_1^{\gamma_2}d^{\gamma_3}\in\im(d_1)$. We may further assume, without loss 
of generality, that $\sum_{\gamma\in\Gamma'}\lambda_\gamma 
u^{\gamma_1}w_1^{\gamma_2}d^{\gamma_3}$ belongs to the subspace of $\im(d_1)$ spanned by 
the homogeneous elements of special degree divisible by $n$.

It is easy to check that $f_{i,j,k}=g_{i,j,k}$ for all 
$i\geq0,j\geq1$ and $k\geq0$ such that $n|i-k$. Therefore, the subspace of 
$\im(d_1)$ spanned by the homogeneous elements of special degree divisible by $n$ is the 
$\KK$-span of the set 
\[\{f_{i,j,k}:i\geq-1,j\geq1,k\geq0\}\cup\{g_{i,j,-1}:i\geq0,j\geq1\}.\] Thus,
\begin{equation}
\label{eq:ind_lineal_gamma0}
 \sum_{\gamma\in\Gamma'}\lambda_\gamma 
u^{\gamma_1}w_1^{\gamma_2}d^{\gamma_3}=\sum_{i\geq0,j\geq1,k\geq0}\mu_{i,j,k}f_{i,j,k
} + \sum_{j\geq1,k\geq0}\mu_{j,k}f_{-1,j,k} + \sum_{i\geq0,j\geq1}\mu_{i,j}'g_{i,j,-1}.
\end{equation}
Let $(a,b,c)$ be an element in 
$\Gamma'$ and denote $L=L_{a,b}$. By Lemma \ref{lemma:hh0_1} we obtain $b\geq1$ and 
$t_a\neq0$. As a consequence $u^aw_1^bd^c\neq z_{a,b,c}$. This implies $s_{a+m,b-m}\neq0$ 
for all $m=1,\dots,L+1$.

Notice that 
$f_{-1,j,k}=(1-r_1^{j-1})w_1^{j-1}d^{k+1}$ and 
$g_{i,j,-1}=(1-r_1^{j-1})u^{i+1}w_1^{j-1}$, and that the elements $(0,j-1,k+1)$ and 
$(i+1,j-1,0)$ belong to $\Gamma$ if and only if $1-r_1^{j-1}=0$. Since 
$(a,b,c)\in\Gamma'\subseteq\Gamma$, the coefficient of $u^aw_1^bd^c$ on the right hand 
side of the above equation is
\[
 \mu_{a-1,b+1,c-1}s_{a,b} - \mu_{a,b,c}t_a.
\]
On the left hand side its coefficient is $\lambda_{a,b,c}$. Since $(a,b,c)\in\Gamma$, it 
follows that $\mu_{a-1,b+1,c-1}s_{a,b}=0$. Therefore $\mu_{a,b,c}= 
-\lambda_{a,b,c}t_{a}^{-1}\neq0$. 
The fact that $s_{a+m,b-m}\neq0$ for $m=1,\dots,L+1$ implies 
$(a+m,b-m,c+m)\notin\Gamma$ and as a consequence the coefficient of $u^{a+m}w_1^{b-m}d^{c+m}$ 
on the left hand side of Equation \ref{eq:ind_lineal_gamma0} is $0$, for the same values 
of $m$. We thus obtain
\[\mu_{a+l,b-l,c+l} = \mu_{a+l-1,b-l+1,c+l-1}\frac{s_{a+l,b-l}}{t_{a+l}},\]

for $1\leq l\leq L$. We deduce $\mu_{a+L,b-L,c+L}\neq0$. 
On the other hand, either $L=b-1$ or $t_{a+L+1}=0$. In either case 
$\mu_{a+L+1,b-L-1,c+L+1}t_{a+L+1}=0$.
Looking at the coefficient of 
$u^{a+L+1}w_1^{b-L-1}d^{c+L+1}$ on both sides of \eqref{eq:ind_lineal_gamma0} we 
obtain $\mu_{a+L,b-L,c+L}s_{a+L+1,b-L-1}=0$. This is a contradiction.
\end{proof}

\begin{proof}[Proof of Proposition \ref{proposition:hh0}]
Using Equations \eqref{eq:1sthh0} and \eqref{eq:2ndhh0} in order to obtain rewriting 
rules, it is clear that $HH_0(A)$ is generated by the classes of the elements 
$u^iw_1^jd^k$ with $(i,j,k)\in\Gamma$.

For an algebra $A$ in the family (F1), $n=0$ and $\Gamma$ is the set
\[
 \{(0,0,k):k\geq0\}\cup\{(i,0,0):i\geq0\}\cup\{(0,j,0):j\geq0\}.
\]
Suppose 
\[
 \sum_{k\geq1}\lambda_kd^k + \sum_{i\geq1}\mu_iu^i + \sum_{j\geq0}\epsilon_jw_1^j \in 
\im(d_1)
\]
for some $\lambda_k,\mu_i,\epsilon_j\in\KK$. By Lemma \ref{lemma:hh0_1} it follows that
$\lambda_k=0=\mu_i$ for all $i,k\geq1$ and $\epsilon_0=0$. Since $A$ belongs to (F1), the 
element $s_{m,j-m}\neq0$ for all $j\geq1$ and $1\leq m\leq j$. This implies 
that $(0,j,0)\in\Gamma_0$ for all $j\geq1$. By Lemma \ref{lemma:ind_lineal_gamma0} we have
that $\epsilon_j=0$ for all $j\geq1$. As a consequence, the classes in $HH_0(A)$ of the 
elements of the set 
\[
 \{d^k:k\geq1\}\cup\{u^i:i\geq1\}\cup\{w^j:j\geq0\},
\]
form a basis and we obtain the first claim of Proposition \ref{proposition:hh0}.

Let $A$ be an algebra in the family (F2) such that $r_1$ is different from $1$ and $-1$. 
In this case $r_2=r_1^{-1}$ and $n$ is different from $1$ and $2$. Here $\Gamma$ is 
the set of elements $(i,j,k)\in\NN_0^3$ satisfying any of the following 
properties.
\begin{multicols}{2}
\begin{enumerate}
 \item $n|j-i$ and $n|j-k$.
 \label{tipo1}
 \item $i=j=0$ and $n\nmid k$.
 \label{tipo6}
 \item $j=k=0$ and $n\nmid i$.
 \label{tipo4} 
 \item $i=k=0$ and $n\nmid j$.
 \label{tipo5}
 \item $i=0$, $n|j$, $n\nmid k$ and $j\geq1$.
 \label{tipo2}
 \item $k=0$, $n|j$, $n\nmid i$ and $j\geq1$.
 \label{tipo3}
\end{enumerate}
\end{multicols}
Let us see that the elements $u^iw_1^jd^k$ with $(i,j,k)$ of types \ref*{tipo2} and 
\ref*{tipo3} belong to $\im(d_1)$. Let $(i,j,k)$ be of type \ref*{tipo2}. We have 
$w_1^jd^k = r_1(1-r_1^{-2})uw_1^{j-1}d^{k+1} - r_1f_{0,j,k}$ and 
\[
 uw_1^{j-1}d^{k+1}=\frac{\phi_kf_{0,j,k} - g_{0,j,k}}{r_1^k-1}.
\]
We deduce that the 
element $w_1^jd^k$ belongs to $\im(d_1)$. The case where $(i,j,k)$ is of type \ref*{tipo3} 
is similar. As a consequence, the homology 
space $HH_0(A)$ is generated by the classes of elements $u^iw_1^jd^k$ with 
$(i,j,k)$ of type \ref*{tipo1}, \ref*{tipo6}, \ref*{tipo4} or \ref*{tipo5}. Observe that 
if $(i,j,k)\in\Gamma$ is not of type \ref*{tipo2} or \ref*{tipo3}, then either $j=0$ or 
$n|i-k$. If $j\geq1$ and $(i,j,k)\notin\Gamma_0$, then 
$u^iw_1^jd^k=z_{i,j,k}\in\im(d_1)$. Thus, we can remove it from our set of generators.
Using Lemmas \ref{lemma:hh0_1} and \ref{lemma:ind_lineal_gamma0} we deduce that the set 
of classes of elements $u^iw_1^jd^k$ with $(i,j,k)$ in 
\[
 \Gamma_1\colonequals\{(i,j,k)\in\Gamma: (i,j,k)\text{ is of type 
}\ref*{tipo1},\ref*{tipo6},\ref*{tipo4}\text{ or }\ref*{tipo5}\text{, and } j=0 \text{ 
or } (i,j,k)\in\Gamma_0\},
\]
is a basis of $HH_0(A)$. Now we describe the set $\Gamma_1$. 

Let $(i,j,k)$ be of type \ref*{tipo1} with $j\geq1$ and $t_i\neq0$. Denote $L=L_{i,j}$. 
We have $t_{i+l}=0$ if and only if $n|2(i+l+1)$. On the other hand $s_{i+m,j-m}=0$ if and 
only if $n|2m$. In particular $s_{i,j}=0$. If $n$ is odd, then there exists 
$0\leq l\leq n-1$ such that $n|2(i+l+1)$, which implies $L\leq n-2$. 
Similarly, in case $n$ is even, we obtain $L\leq n/2-2$. In either 
case $s_{m,j-m}\neq0$ for all $m=1,\dots,L+1$ and as a consequence 
$u^iw_1^jz^k\neq z_{i,j,k}$. 
This implies $(i,j,k)\in\Gamma_1$ for all $(i,j,k)$ of type \ref*{tipo1}.

Suppose $n$ is even. Let $(0,j,0)$ be of type \ref*{tipo5}. Since 
$n\nmid j$ we have 
$j\geq1$. Let $L=L_{0,j}$. The element $t_l$ is zero if and only if $n|2(l+1)$. 
Therefore $L=\min\{n/2-2,j-1\}.$
On the other hand 
$s_{m,j-m}$ is zero if and only if $n|j-2m$. If $j$ is odd, then this last condition is 
never satisfied and $s_{m,j-m}\neq0$ for all $m$, from where we deduce $w^j\neq 
z_{0,j,0}$ and $(0,j,0)\in\Gamma_1$. Suppose $j$ is even. If $L=j-1$, then $s_{m,j-m}=0$ 
for $m=j/2$ and $j/2<L+1$, which implies $w^j=z_{0,j,0}$. If $L=n/2-2$, so there exists 
$1\leq m \leq n/2-1=L+1$ such that $n/2$ divides $j/2 - m$. This implies $s_{m,j-m}=0$ 
and $w^j=z_{0,j,0}$. We conclude that an element $(0,j,0)$ of type \ref*{tipo5} belongs 
to $\Gamma_1$ if and only if it $j$ is odd. We have proven the first part of the second 
claim of Proposition \ref{proposition:hh0}.

Suppose $n$ is odd and let $(0,j,0)$ be of type \ref*{tipo5}. Set $L=L_{0,j}$. In this 
case $t_l=0$ if and only if $n|l+1$. We deduce $L=\min\{j-1,n-2\}$. On the other hand, 
$s_{m,j-m}=0$ if and only if $n|j-2m$. Since $2$ is invertible modulo $n$ and $n\nmid j$, 
this condition is always satisfied for some $1\leq m\leq n-1$. Therefore, if $j\geq n-1$, 
we obtain $w^j=z_{0,j,0}$ and $(0,j,0)\notin\Gamma_1$. Suppose $j\leq n-2$. We have 
$L=j-1$. The absolute value of $j-2m$ is positive and strictly less than $n$ for all 
$m=1,\dots,j$. Thus, if $1\leq m\leq n-1$ is such that $n|j-2m$, we get $m\geq j+1$. This 
implies
\[
 \max\{m\geq1: s_{m,j-m}\neq0\}\geq j+1> L+1.
\]
As a consequence $w^j\neq z_{0,j,0}$ and $(0,j,0)\in\Gamma_1$. This proves the second 
part of the second claim Proposition \ref{proposition:hh0}.

The proof of the cases where $n=1$ or $n=2$ is similar.
\end{proof}

We will now describe $HH_3(A)$. The result depends heavily on whether the algebra belongs 
to (F1) or to (F2). In the first case $HH_3(A)$ annihilates, while for (F2) the dimension 
is always infinite, for which the basis differs considerably in the root of unity case.

\begin{proposition}
\label{proposition:hh3}
Let $A=A(\alpha,\beta,0)$ be a down-up algebra. 
\begin{enumerate}
 \item If $A$ belongs to (F1), then $HH_{3}(A)$ vanishes. 
 \item If $A$ belongs to (F2), the 
Hochschild homology group $HH_{3}(A)$ 
has a basis formed by the classes of the elements of the set 
\begin{itemize}
\item $\{ w_1^{2i}w_2^{2i} | d^2u^2 : i\geq0\}$ if $r_1$ is not a root of unity,
\item $\{\ w_1^iw_2^ju^{nk}d^{nl}|d^2u^2: n|i-j\text{ and }kl=0\}$ if $r_1$ is a 
primitive $n$-th root of unity with $n\geq3$,
\item $\{ w_1^{2i}|d^2u^2 : i\geq0\}$ if $r_1=-1$,
\item $\{ w_1^{i} |d^2u^2 : i\geq0\}$ if $r_1=1$.
\end{itemize}
\end{enumerate}
\end{proposition}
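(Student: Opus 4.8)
The plan is to use that $HH_3(A)$ is the kernel of the top differential $d_3$ of the complex computing Hochschild homology, since nothing maps into $A\otimes\Omega$. As $\Omega=\KK\,d^2u^2$ is one dimensional, I identify $A\otimes\Omega$ with $A$ and read off from \eqref{eq:d3} that $a\otimes d^2u^2\in\ker d_3$ if and only if the two equations $ua+\beta au=0$ and $ad+\beta da=0$ hold simultaneously. Both equations are bihomogeneous for $(\mathrm{deg},\mathrm{s\text{-}deg})$ (the first raises $\mathrm{s\text{-}deg}$ by $1$, the second lowers it by $1$, and both raise $\mathrm{deg}$ by $1$), so $\ker d_3$ is bigraded and I may work one bidegree at a time. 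In a fixed bidegree $(D,S)$ the basis $\{u^iw_1^jd^k\}$ restricts to the single chain $u^{i_0-j}w_1^jd^{k_0-j}$, $j\geq0$, with $i_0=(D+S)/2$ and $k_0=(D-S)/2$ fixed; thus a general element is $a=\sum_j c_j\,u^{i_0-j}w_1^jd^{k_0-j}$, and, applying \eqref{eq:1sthh0}--\eqref{eq:2ndhh0} (equivalently Lemma \ref{fact:comm}), the vanishing of the two components of $d_3(a\otimes d^2u^2)$ becomes the pair of coupled recurrences
\[
c_{j}\,(1-r_1^{\,j+1}r_2^{\,k_0-j+1})=c_{j-1}\,r_2\,\phi_{k_0-j},
\qquad
c_{j}\,(1-r_1^{\,j+1}r_2^{\,i_0-j+1})=c_{j-1}\,r_2\,\phi_{i_0-j},
\]
to be solved for all $j\geq0$ with $c_{-1}=0$.

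For the family (F1) I would invoke the genericity hypothesis: every leading coefficient $1-r_1^{\,j+1}r_2^{\,\ast+1}$ is nonzero because the exponent pair $(j+1,\ast+1)\neq(0,0)$. Evaluating the first recurrence at $j=0$ forces $c_0\,(1-r_1r_2^{\,k_0+1})=0$, hence $c_0=0$, and since each leading coefficient is invertible the recurrence propagates $c_j=0$ for all $j$. Thus $\ker d_3=0$ in every bidegree and $HH_3(A)=0$, which is part (1) and is consistent with $A$ being twisted Calabi--Yau with one-dimensional centre.

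For the family (F2) I would specialise $\beta=-1$, $r_2=r_1^{-1}$, so that the two equations read $ua=au$ and $ad=da$; hence $\ker d_3$ is exactly the centre $Z(A)$, matching the $3$-Calabi--Yau identification $HH_3(A)\cong HH^0(A)$. Now the leading coefficients become $1-r_1^{\,2j-k_0}$ and $1-r_1^{\,2j-i_0}$, while $\phi_{p}=0$ precisely when $r_1^{\,2(p+1)}=1$. The mechanism is that a nonzero solution in a bidegree can appear only at a \emph{free slot}, i.e.\ an index where a leading coefficient vanishes at the same time as its coupling term (the already-forced $c_{j-1}$, or the factor $\phi$); past such a slot the remaining $c_j$ are uniquely determined, and compatibility of the two recurrences (whose free slots sit at $2j=k_0$ and $2j=i_0$) dictates where a genuine central element lives. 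I would then exhibit the central generators realising these solutions: when $r_1$ is not a root of unity the product $w_1w_2$ is central, since $w_1w_2\,u=r_1r_2\,u\,w_1w_2=u\,w_1w_2$ and $d\,w_1w_2=r_1r_2\,w_1w_2\,d=w_1w_2\,d$, and the recurrence determines which of its powers (tensored with $d^2u^2$) survive; when $r_1$ is a primitive $n$-th root of unity the solutions are spanned by the $w_1^iw_2^ju^{nk}d^{nl}$ with $n\mid i-j$ and $kl=0$, together with their degenerations for $n\leq2$ (where $w_1=w_2$ and only $w_1$ together with the relevant parity survives). Verifying directly that these monomials satisfy both equations is routine via \eqref{eq:permws} and Lemma \ref{fact:comm}.

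The final and most delicate step is the combinatorial bookkeeping in the root-of-unity cases: one must determine, as a function of the order $n$ of $r_1$ and of the bidegree, exactly which indices are free slots, and then prove both linear independence and spanning of the proposed bases. This is the source of the divisibility and parity conditions ($n\mid i-j$, $kl=0$, and the restriction to even powers when $n=2$), and it runs closely parallel to the analysis of $\Gamma$, $\Gamma_0$ and Lemmas \ref{lemma:hh0_1}--\ref{lemma:ind_lineal_gamma0} carried out for $HH_0$. I expect this periodicity-driven case analysis, rather than any conceptual difficulty, to be the main obstacle, the non-root-of-unity case being the clean limit in which the centre reduces to the polynomial algebra generated by $w_1w_2$.
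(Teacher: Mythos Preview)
Your approach is essentially the paper's: fix a bidegree, expand in the $\{u^iw_1^jd^k\}$ basis, and extract the two coupled recurrences from $d_3$. For (F1) your induction from the boundary $c_{-1}=0$ is the same argument the paper runs (it indexes by the $u$-exponent and descends from the top, but the content is identical). For (F2) you correctly note that $\beta=-1$ turns the two conditions into $ua=au$ and $ad=da$, so $\ker d_3=Z(A)$; this is exactly the $3$-Calabi--Yau identification the paper also uses.

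The one genuine difference is the root-of-unity subcase of (F2). You plan a direct combinatorial analysis of the recurrences, parallel to the $\Gamma,\Gamma_0$ machinery for $HH_0$. The paper bypasses this entirely: once $\ker d_3=Z(A)$ is established, it simply quotes Kulkarni's computation of the centre \cite{Ku01} (Lemma~2.0.1, Theorems~4.0.3 and~4.0.4) and reads off the basis. Your route would work, but it reproduces a result already in the literature; citing saves you the most laborious part of the argument. One caution on the non-root-of-unity case: observing that $w_1w_2$ is central is not by itself enough---you must still run the recurrence (as the paper does) to determine precisely which bidegrees support a nonzero kernel and to verify that the listed elements span it, rather than leaving this to ``the recurrence determines which powers survive.''
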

%%%%%%%
\begin{proof} 
Let $v\in\Ker d_3$. Since the differentials respect the bidegree, we may assume 
$v$ is homogeneuos of bidegree $(s,t)$. Let 
$l=(s+t)/2$. An element $u^iw_1^jd^k$ homogeneous of bidegree $(s,t)$ satisfies 
$j = l-i\geq0$ and $k=i-t\geq0$. As a consequence, we deduce $l\geq0$ and $t\leq 
l$. Set 
\[     v = \sum_{i} c_{i} u^{i} w_{1}^{l-i} d^{i-t},     \]
where $c_i$ vanishes either when $i<0$, $l-i<0$ or when $i-t<0$. Using the formulas 
in Fact 
\ref{fact:comm} we obtain the following equality.
\begin{equation*}
  \begin{split}
     d_{3}&(v | d^{2} u^{2})\\
     &= - \Big(\sum c_{i} \big( (1 + \beta r_{1}^{l-i} r_{2}^{i-t}) u^{i+1} 
     w_{1}^{l-i} d^{i-t} + \beta \frac{\phi_{i-t-1} }{r_{1}} u^{i} 
     w_{1}^{l-i+1} d^{i-t-1} \big) \Big) | d^{2} u\\
     &+\Big(\sum c_{i} \big( (1+ \beta r_{1}^{l-i} r_{2}^{i}) u^{i} w_{1}^{l-i} 
     d^{i-t+1} + \beta \frac{\phi_{i-1} }{r_{1}} u^{i-1} 
w_{1}^{l-i+1}d^{i-t} \big) \Big) | d u^{2}.
  \end{split}
\end{equation*}
The condition $d_{3}(v | d^{2} u^{2})=0$ implies the vanishing of each summand 
separately. By looking at the coefficient of 
$u^{a+1}w_1^{l-a}d^{a-t}$ in the first constraint and at the coefficient of 
$u^aw_1^{l-a}d^{a-t+1}$ in the second constraint, we obtain the following 
identities. For all $a\geq0$,
\begin{align*}
 0&=c_a(1+\beta r_1^{l-a}r_2^{a-t}) + c_{a+1}\beta 
\frac{\phi_{a-t} }{r_1},\\
 0&=c_a(1+\beta r_1^{l-a}r_2^{a}) + c_{a+1}\beta\frac{\phi_{a} }{r_1}.
\end{align*}

Suppose $A$ belongs to (F1).
The first equality implies that $c_a = 
\mu_ac_{a+1}$, where $\mu_a = \beta\phi_{a-t} (r_1(1+\beta 
r_1^{l-a}r_2^{a-t}))^{-1}$. Since $c_a=0$ for all $a>l$, we deduce $c_a=0$ for 
all $a$. As a consequence, $HH_3(A)=0$.

Suppose now that $A$ belongs to (F2) and $r_1$ is not a root of 
unity. Using the fact that $r_2 = r_1^{-1}$, the equalities above are
\begin{align*}
 0&=c_a(1 - r_1^{l-2a+t}) - c_{a+1}\frac{\phi_{a-t} }{r_1},\\
 0&=c_a(1 - r_1^{l-2a}) - c_{a+1}\frac{\phi_{a} }{r_1}.
\end{align*}

If $l$ is odd, the second equality and an argument similar to the case (F1) 
show that $c_a=0$ for all $a$, and so $v=0$. 
Suppose $l$ is even. We may use the second equation for $a$ ranging from 
$l$ to $l/2+1$ and the previous argument to deduce $c_a=0$ for $a\in\{ 
l/2+1,\cdots,l\}$. 
Replacing $a=l/2$ in the first equation we obtain $c_{l/2}(1-r_1^t)=0$. 
If $t\neq0$, then $c_{l/2}=0$, and the first equation for values of $a$ ranging from 
$l/2-1$ to $0$ proves that $c_a=0$ for all $a$ and therefore $v=0$. If $t=0$, then the 
same argument proves that there exists $\mu_a\in k$ such that $c_a = \mu_a c_{l/2}$ for 
all $a$. 
Observe that in this case $l$ is even and $t=0$, which implies $s=4k$ for some 
$k\in\ZZ$.
As a consequence, the homogeneous component of bidegree $(s,t)$ of $\Ker d_3$ is trivial 
if $(s,t)\neq(4k,0)$ for some $k\in\ZZ$, and in case $(s,t)=(4k,0)$, it is one 
dimensional. 
Using \eqref{eq:permws} it is easy to see that the element 
$w_1^{2k}w_2^{2k}$ belongs to the homogeneous component of bidegree 
$(4k,0)$.

The case where $A$ belongs to (F2) and $r_1$ is a root of unity follows from 
\cite{Ku01}, Lemma 2.0.1, Theorems 4.0.3 and 4.0.4, together with the fact 
that $A$ is $3$-Calabi-Yau, so $HH_3(A)[4]\cong HH^0(A)$. 
\end{proof}

Theorem \ref{thm:homology} follows from Proposition \ref{proposition:hh0}, 
Proposition \ref{proposition:hh3} and the identities in \eqref{eq:hilbert_series}.

\section{Hochschild cohomology}
\label{sec:cohomology}
As we mentioned in Section \ref{sec:homology}, if $A$ belongs to (F2), then it is 
$3$-Calabi-Yau and the dimension of the Hochschild cohomology spaces can be deduced from 
Theorem \ref{thm:homology}, since $HH^i(A)$ is isomorphic to $HH_{3-i}(A)[4]$ for all 
$i\in\{0,1,2,3\}$.
We use again the minimal resolution of $A$ as $A$-bimodule to
obtain the following complex whose homology is isomorphic to the Hochschild cohomology of 
$A$.
\begin{equation}
\label{eq:coHHA}
     0 \rightarrow A 
           \overset{d_{0}^{*}}{\rightarrow} V^{*} \otimes A
           \overset{d_{1}^{*}}{\rightarrow} R^{*} \otimes A 
           \overset{d_{2}^{*}}{\rightarrow} \Omega^{*} \otimes A  
           \rightarrow 0,     
\end{equation}
where $V^{*}$ is the dual space of $V$ spanned by the basis $\{ U, D \}$, and similarly 
for $R^{*}$ and 
$\Omega^{*}$. 
The differentials are given by
\begin{align}
\label{eq:d1*}
\nonumber d_{0}^{*}(a) &= U | (u a - a u) + D | (d a - a d),\\
d_{1}^{*}(x) &= D^2U \otimes \Delta_1(x) + DU^2\otimes\Delta_2(x),
\end{align}
where, for $x=U \otimes a + D \otimes a'$,
\begin{align*}
&\Delta_1(x)=d^{2} a + a' d u + d a' u - \alpha (d a d + a' u d + d u a') 
- \beta (a d^{2} + u a' d + u d a') - \gamma a', \\
&\Delta_2(x)=d a u + d u a + a' u^{2} - \alpha (a d u + u d a + u a' u) 
- \beta (a u d + u a d + u^{2} a') - \gamma a,
\end{align*}
and
\begin{equation}
\label{eq:d2*}
d_{2}^{*}(D^{2} U \otimes a + D U^{2} \otimes a') = D^{2} U^{2} \otimes (d a' + \beta a' d 
- a u - \beta u a).
\end{equation}
Clearly $HH^i(A)=0$ for all $i\geq4$.
From now on, assume that $A$ belongs to (F1).
Note that by defining $\mathrm{bideg}(U) = (-1,-1)$ and $\mathrm{bideg}(D)=(1,-1)$, the 
differentials of the complex \eqref{eq:coHHA} are of bidegree zero. 
We recall that $HH^0(A)$ is the center $\mathscr Z(A)$ of the algebra $A$.

\begin{proposition}
\label{prop:cohh0}
Let $A=A(\alpha,\beta,0)$ be a down-up algebra of the family (F1). The 
cohomology space $HH^{0}(A)$ is $\KK.1_{A}$.
\end{proposition}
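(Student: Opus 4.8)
The plan is to identify $HH^0(A)$ with $\ker d_0^*$. By \eqref{eq:d1*} we have $d_0^*(a) = U|(ua-au) + D|(da-ad)$, so $\ker d_0^*$ is exactly the set of elements of $A$ commuting with both $u$ and $d$; that is, $HH^0(A) = \mathscr Z(A)$. Hence it suffices to show that the centre of an algebra in (F1) reduces to the scalars. Since $u$ and $d$ are bihomogeneous and the commutator bracket preserves the bidegree, $\mathscr Z(A)$ is a bigraded subspace, and I would first reduce to a single bihomogeneous central element $z$ of bidegree $(s,t)$.

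Next I would expand $z$ in the basis $\{u^i w_1^j d^k\}$. The bidegree constraints force $i = p-j$ and $k = q-j$ with $p = (s+t)/2$ and $q = (s-t)/2$, so $z = \sum_j c_j\, u^{p-j} w_1^j d^{q-j}$, the sum running over $0 \le j \le \min(p,q)$. Applying \eqref{eq:1sthh0} termwise and collecting the coefficient of each basis monomial, the equation $[z,d] = 0$ becomes, for every $j$ (with the convention $c_{-1}=0$),
\begin{equation*}
c_j\,\bigl(1 - r_1^{j} r_2^{p-j}\bigr) = c_{j-1}\,\frac{\phi_{p-j}}{r_1}.
\end{equation*}
The genericity hypothesis of (F1) states that $r_1^{a} r_2^{b} \neq 1$ for every $(a,b)\neq(0,0)$ with $a,b\geq0$; in particular $1 - r_1^{j} r_2^{p-j} \neq 0$ whenever $p \geq 1$ and $0 \le j \le p$. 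The seed $j=0$ gives $c_0(1 - r_2^{p}) = 0$, so $c_0 = 0$ as soon as $p \geq 1$; and since each factor $1 - r_1^{j} r_2^{p-j}$ is then invertible, a straightforward upward induction on $j$ forces every $c_j = 0$. Thus $z = 0$ whenever $p \geq 1$.

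It remains to treat the bidegrees with $p = 0$, where the constraints leave the single term $z = c_0\, d^{q}$. For these I would use the second identity of Lemma \ref{fact:comm} to compute $[z,u]$: the coefficient of $u\,d^{q}$ in $d^{q}u - u d^{q}$ equals $r_2^{q}-1$, which is nonzero for $q \geq 1$ by genericity, again forcing $c_0 = 0$. Consequently the only surviving bidegree is $p = q = 0$, that is $(s,t)=(0,0)$, whose homogeneous component is spanned by $1_A$. Therefore $\mathscr Z(A) = HH^0(A) = \KK.1_{A}$.

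I expect the main obstacle to be the genericity bookkeeping: one must verify carefully that the scalar $1 - r_1^{j} r_2^{p-j}$ stays nonzero along the entire range $0 \le j \le p$ for $p \geq 1$, so that the vanishing of the seed coefficient $c_0$ genuinely propagates to all $c_j$ rather than stalling at some intermediate $j$. This is precisely where hypothesis (F1) is indispensable, as the presence of a nontrivial centre (the Casimir element) in the Calabi--Yau family (F2) makes clear.
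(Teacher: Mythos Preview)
Your proof is correct and follows essentially the same route as the paper: both identify $HH^0(A)$ with the centre, expand in the basis $\{u^i w_1^j d^k\}$, and use the commutation identities \eqref{eq:1sthh0}--\eqref{eq:2ndhh0} to obtain a recurrence whose nonvanishing coefficients (guaranteed by genericity) force all nontrivial coefficients to zero. Your preliminary reduction to a single bihomogeneous element is a mild streamlining---the paper works with a general element and runs the recursion downward in $j$ rather than upward---but the substance is identical.
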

% %%%%%%%
\begin{proof} 
It is clear that $\KK\cdot 1_{A} \subseteq \mathscr{Z}(A)$. 
We shall prove the other inclusion. 
Let 
\[     a = \sum c_{ijk} u^{i} w_{1}^{j} d^{k},     \]
where the sum is indexed over all integers $i, j, k \in \NN_{\geq 0}$, the 
support is finite and $c_{ijk} \in \KK$. 
Let us suppose that $a \in \mathscr{Z}(A)$, so in particular $u a - a u = 0$. 
It suffices to prove that $c_{ijk} = 0$ for all $(i, j, k) \in \NN_{\geq 0}^{3} \setminus \{ (0,0,0) \}$. 
Using the identities \eqref{eq:permws} we get that
\begin{equation}
\label{eq:1stcohh0}     
 a d - d a = \sum c_{ijk} \Big( (1 - r_{1}^{j} r_{2}^{i}) u^{i} w_{1}^{j} d^{k+1} - 
\frac{\phi_{i-1}}{r_{1}} u^{i-1} w_{1}^{j+1} d^{k} \Big),    
\end{equation}
and 
\begin{equation}
\label{eq:2ndcohh0} 
     a u - u a = - \sum c_{ijk} \Big( (1 - r_{1}^{j} r_{2}^{k}) u^{i+1} w_{1}^{j} d^{k} - 
\frac{\phi_{k-1}}{r_{1}} u^{i} w_{1}^{j+1} d^{k-1} \Big).     
\end{equation}
Since $a \in \mathscr{Z}(A)$, both two expressions vanish.
By regarding the total coefficient of the monomial $u^{i_{0}+1}d^{k_{0}}$ on the
right hand side of \eqref{eq:2ndcohh0} we get that $c_{i_{0},0,k_{0}}=0$ if 
$k_{0} \neq 0$, since in this case $(1 - r_{2}^{k_{0}}) \neq 0$.
Analogously, since the total coefficient of the monomial $u^{i_{0}}d^{k_{0}+1}$ 
on the right hand side of \eqref{eq:1stcohh0} vanishes, we see that 
$c_{i_{0},0,k_{0}}=0$ if $i_{0} \neq 0$, since in this case $(1 - 
r_{2}^{i_{0}}) \neq 0$. As a consequence, we conclude that $c_{i,0,k} = 0$, for 
all $(i,k) \in \NN_{0}^{2} \setminus \{ (0, 0) \}$. 

The vanishing of the coefficient of the monomial 
$u^{i_{0}+1}w^{j_{0}}d^{k_{0}}$ on the right hand side of \eqref{eq:2ndcohh0}, 
for $j_{0} > 0$, implies that  
\[     c_{i_{0},j_{0},k_{0}} = - \frac{c_{i_{0}+1,j_{0}-1,k_{0}+1}}{(1 - r_{2}^{k_{0}} r_{1}^{j_{0}})}.     \]
Note that the hypothesis of genericity implies that the denominator never 
vanishes. Iterating this identity we obtain that $c_{ijk}$ is proportional to 
$c_{i+j,0,k+j}$, which vanishes if $(i,j,k) \neq (0,0,0)$, and so $c_{ijk} = 0$ 
for all $(i, j, k) \in \NN_{\geq 0}^{3} \setminus \{ (0,0,0) \}$, thus proving
the proposition. 
\end{proof}
% 
% %%%%%%%
\begin{proposition}
\label{prop:cohh3}
Let $A=A(\alpha,\beta,0)$ be a down-up algebra of the family (F1). The  
cohomology space $HH^{3}(A)$ is isomorphic to the $\KK$-vector space spanned by 
the classes of the elements of the set 
\[
\{ D^{2} U^{2} | w_{1}^{j} : j\geq0\text{ and } j\neq2 \}\cup\{D^{2} U^{2} | 
uw_1d\}.
\]
\end{proposition}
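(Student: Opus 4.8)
The plan is to compute $HH^3(A)=\Omega^*\otimes A/\im(d_2^*)$ directly, since $HH^3(A)=\operatorname{coker}(d_2^*)$ because the resolution ends at $\Omega^*\otimes A$. From \eqref{eq:d2*} the image of $d_2^*$ is spanned by the elements $D^2U^2\otimes(da'+\beta a'd-au-\beta ua)$ as $a,a'$ range over $A$. So I would first set up rewriting rules that express, modulo $\im(d_2^*)$, an arbitrary monomial $D^2U^2\otimes u^iw_1^jd^k$ in terms of a smaller set of generators. Taking $a'=u^iw_1^jd^k$ and $a=0$ gives that $d\cdot u^iw_1^jd^k+\beta u^iw_1^jd^k\cdot d$ is a boundary, and taking $a=u^iw_1^jd^k$, $a'=0$ gives that $u\cdot u^iw_1^jd^k+\beta u^iw_1^jd^k\cdot u$ is a boundary. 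Using the commutation formulas \eqref{eq:permws} and Lemma \ref{fact:comm} to move $d$ and $u$ past the $w_1$'s, each of these two relations rewrites a monomial with $d$-degree (resp. $u$-degree) raised in terms of monomials of strictly smaller $u$-index (resp. $d$-index), plus a monomial of the same shape with a modified $w_1$-power.

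Concretely, I would run the $d$-relation to eliminate all monomials $u^iw_1^jd^k$ with $i>0$ (pushing the $u$-index down to zero), and dually run the $u$-relation to eliminate all those with $k>0$, reducing every class to a combination of the pure $w_1$-powers $D^2U^2\otimes w_1^j$, together with possible residual lower-shape terms. The key will be to track exactly when the scalar coefficients $(1+\beta r_1^{\bullet}r_2^{\bullet})$ appearing from the commutation relations vanish: in family (F1), $\beta\neq0$ and the genericity hypothesis $r_1^ar_2^b\neq1$ controls whether $1+\beta r_1^{l-i}r_2^{\bullet}=0$, which is exactly the obstruction to the rewriting being invertible. This is the same coefficient pattern that appeared in the proof of Proposition \ref{proposition:hh3}; here it plays the dual role. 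The genericity hypothesis should guarantee that the reduction is reversible except in one special bidegree, which is precisely where the extra generator $D^2U^2|uw_1d$ survives and where $D^2U^2|w_1^2$ drops out.

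Having reduced the generating set to $\{D^2U^2\otimes w_1^j:j\geq0\}\cup\{D^2U^2\otimes uw_1d\}$, I would then prove linear independence of the stated classes. For this I would exhibit, for each claimed basis element, a bidegree in which the cokernel is small, and argue using the Hilbert series $HH^3(A)(t)=1/(t^4(1-t^2))$ from Theorem \ref{thm:cohomology} (whose coefficients I can match against the special-degree-zero monomials $D^2U^2\otimes w_1^j$ plus the single exceptional monomial). The bidegree bookkeeping, with $\operatorname{bideg}(D)=(1,-1)$ and $\operatorname{bideg}(U)=(-1,-1)$, shows that $D^2U^2$ has special degree $0$ and usual degree $4$, so $D^2U^2\otimes w_1^j$ lives in special degree $0$ and the monomial $D^2U^2\otimes uw_1d$ also in special degree $0$ but with a different usual degree than $D^2U^2\otimes w_1^2$; this is what lets the $j=2$ power be omitted while a genuinely new class is added. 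Matching these two descriptions term by term in each bidegree, using \eqref{eq:HilbSerAtotal}, completes the proof.

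The main obstacle I anticipate is the vanishing-coefficient analysis: verifying that in every bidegree the reduction collapses the image to exactly the right codimension, neither too large (which would leave spurious generators) nor too small (which would kill a genuine class). In particular I expect the single exceptional bidegree --- the one responsible for trading $D^2U^2|w_1^2$ for $D^2U^2|uw_1d$ --- to require a careful separate computation of the relevant small linear system coming from \eqref{eq:d2*}, exactly analogous to the $l$ even, $t=0$ case that appeared in the proof of Proposition \ref{proposition:hh3}.
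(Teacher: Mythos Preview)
Your spanning strategy --- using the relations $da'+\beta a'd$ and $au+\beta ua$ in $\im(d_2^*)$ to rewrite every monomial class --- is exactly the paper's. However, the scalar you quote is not the right one: from Lemma~\ref{fact:comm} one gets
\[
d\,(u^iw_1^jd^k)+\beta\,(u^iw_1^jd^k)\,d=(\beta+r_1^jr_2^i)\,u^iw_1^jd^{k+1}+\tfrac{\phi_{i-1}}{r_1}\,u^{i-1}w_1^{j+1}d^k,
\]
so the leading coefficient is $\beta+r_1^jr_2^i$, not $1+\beta r_1^{\bullet}r_2^{\bullet}$ (the latter is the $d_3$-coefficient from the homology computation). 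Since $\beta=-r_1r_2$, genericity gives $\beta+r_1^jr_2^i=0\iff(j,i)=(1,1)$, and this is what singles out $uw_1d$ and kills $w_1^2$ (set $(i,j,k)=(1,1,0)$ above). With your stated coefficient, $1+\beta r_1^ar_2^b=1-r_1^{a+1}r_2^{b+1}$ would never vanish for $a,b\geq0$, so your reduction would see no obstruction and you would miss the swap entirely. Relatedly, your claim that $D^2U^2|uw_1d$ and $D^2U^2|w_1^2$ lie in different usual degrees is false: $\deg(uw_1d)=\deg(w_1^2)=4$, so they share the \emph{same} bidegree, which is precisely why one replaces the other.

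The more serious gap is in linear independence. Appealing to $HH^3(A)(t)=1/(t^4(1-t^2))$ from Theorem~\ref{thm:cohomology} is circular: that Hilbert series is assembled \emph{from} this proposition (see the last line of Section~\ref{sec:cohomology}), and Proposition~\ref{prop:serie_hilbert_HH2} already takes $HH^3(A)(t,s)$ as an input to compute $HH^2$. The paper proves independence directly: assuming $\sum_{j\neq2}\lambda_jw_1^j+\lambda_2\,uw_1d\in S$, one restricts to the bidegree-$(2l,0)$ component, writes the relevant slice of $S$ as the span of the finitely many elements $f_{j,k}=(u^{k+1}w_1^jd^k)d+\beta d(u^{k+1}w_1^jd^k)$ with $j+k=l-1$, and checks that no nontrivial combination of these hits $w_1^l$ (or $uw_1d$ when $l=2$). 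You need an argument of this kind in place of the Hilbert-series shortcut.
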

% %%%%%%%
\begin{proof} 
Identifying the space $\Omega^*\otimes A$ with $A$, the cohomology space 
$HH^3(A)$ is $A/S$, where $S$ is the $\KK$-vector space $\{da + \beta ad + a'u 
+ \beta ua':a,a'\in A\}$. 
Denote by $\pi:A\to HH^3(A)$ the canonical projection. If $v$ is an element of 
$HH^3(A)$, denote by $\KK v$ the $\KK$-vector space spanned by it.

We shall prove that the classes of 
$\{w_{1}^{j} : j\geq2 \text{ and }j\neq2\}\cup\{uw_1d\}$ in the cohomology space 
$HH^{3}(A)$ form a basis. Let $a = u^{i} w_{1}^{j} d^{k},$ where $i, j, k \in 
\NN_{\geq 0}$. 
It is straightforward to compute 
\begin{equation}
\label{eq:1stcohh3}
     da + \beta ad =  \big(\beta + r_{1}^{j} r_{2}^{i}\big) u^{i} w_{1}^{j} d^{k+1} + \frac{\phi_{i-1}(r_{1},r_{2})}{r_{1}} u^{i-1} w_{1}^{j+1} d^{k},     
\end{equation}
and 
\begin{equation}
\label{eq:2ndcohh3}
au + \beta ua= \big(\beta + r_{1}^{j} r_{2}^{k}\big) u^{i+1} w_{1}^{j} d^{k} + \beta \frac{\phi_{k-1}(r_{1},r_{2})}{r_{1}} u^{i} w_{1}^{j+1} d^{k-1}.     
\end{equation}

The hypothesis of genericity implies that $\beta + r_1^jr_2^l=0$ if and only if 
$(i,l)=(1,1)$. Note that the first coefficient on the right hand side of
the above equations is of this form. Setting $i=0$ in \eqref{eq:1stcohh3} and $k=0$ in 
\eqref{eq:2ndcohh3} we obtain that $w_{1}^{j} d^{l}$ and $u^{l}w_{1}^j$ belong 
to $S$ for all $j\geq0$ and $l\geq1$. Let $i,j,k\geq0$. Equation 
\eqref{eq:1stcohh3} implies that $\pi(u^iw_1^jd^k)\in 
\KK\pi(u^{i-1}w_1^{j+1}d^{k-1})$ for all $k\geq1$ and
$(i,j)\neq(1,1)$. 
Suppose $(i,j)\notin\{(1,1),(2,0)\}$. By a repeated use of 
\eqref{eq:1stcohh3} and the remarks above, we conclude that $u^{i} w_{1}^{j} 
d^{k}$ lies in $S$ if $i\neq k$, and we also deduce that $\pi(u^iw_1^jd^i)\in 
K\pi(w_1^{i+j})$. 
By a similar argument using \eqref{eq:2ndcohh3} we 
obtain that in case $(i,j)\in\{(1,1),(2,0)\}$, the element 
$\pi(u^iw_1^jd^k)$ belongs to $\KK\pi(uw_1d)$ for all $k$. 
As a consequence, the set $\{\pi(w_{1}^{j}) : j\geq2\}\cup\{\pi(u^2d^2)\}$ generates $HH^3(A)$ as a 
$\KK$-vector space. 
On the other hand, \eqref{eq:1stcohh3} tells us that the element $\pi(w_1^2)$ vanishes for $(i,j,k)=(1,1,0)$.

Let us see that the set $\{\pi(w_{1}^{j}) : j\geq2\}\cup\{\pi(u^2d^2)\}$ is
linearly independent. Suppose there exist elements $\lambda_j\in \KK$, with 
$j\geq0$, and $a,a'\in A$, such that 
\[
 \sum_{j\neq2}\lambda_jw_1^j + \lambda_2uw_1d = da + \beta ad + au + \beta ua.
\]
Let $l\geq0$. By looking at the homogeneous component of bidegree $(2l,0)$ in 
the equation above, we deduce that there exist $\epsilon_{k,j}\in \KK$ such that
\begin{align*}
 \lambda_l w_1^l &= \sum_{j+k=l-1}\epsilon_{j,k}(f_{j,k}),& &\text{ if }l\neq2,\\
 \lambda_2 uw_1d &= \sum_{j+k=1}\epsilon_{j,k}(f_{j,k}),& &\text{ if }l=2,
\end{align*}
where $f_{j,k} = (u^{k+1}w_1^jd^k)d + \beta d(u^{k+1}w_1^jd^k)$ for all 
$k,j\geq0$. It is easy to see that these equations imply $\lambda_j=0$ for all
$j\geq0$.
\end{proof}
% 
% %%%%%%%
\begin{proposition}
\label{prop:cohh1}
Let $A=A(\alpha,\beta,0)$ be a down-up algebra of the family (F1). The 
cohomology space $HH^{1}(A)$ is $2$-dimensional and it is spanned by the 
classes of $\{ D | d, U | u \}$. 
\end{proposition}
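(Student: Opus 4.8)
The plan is to compute $HH^1(A)=\Ker d_1^*/\im d_0^*$ directly, using the explicit differentials in \eqref{eq:d1*} and the basis $\{u^iw_1^jd^k\}$, exactly in the style of Propositions \ref{prop:cohh0} and \ref{prop:cohh3}. First I would observe that the special grading forces everything into a convenient shape: since $\mathrm{bideg}(U)=(-1,-1)$ and $\mathrm{bideg}(D)=(1,-1)$, a cocycle $x=U|a+D|a'$ that is homogeneous of special degree $m$ must have $a$ of special degree $m+1$ and $a'$ of special degree $m-1$. The candidate cocycles $U|u$ and $D|d$ both sit in special degree $0$, and I expect the bulk of the work to be showing that $\Ker d_1^*$ is spanned modulo $\im d_0^*$ by exactly these two classes.

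Next I would write a general cochain $x=U|a+D|a'$ with $a=\sum c_{ijk}u^iw_1^jd^k$ and $a'=\sum c'_{ijk}u^iw_1^jd^k$, and impose the cocycle condition $d_1^*(x)=0$, i.e.\ $\Delta_1(x)=0$ and $\Delta_2(x)=0$ (with $\gamma=0$). Using Lemma \ref{fact:comm} and \eqref{eq:permws} to rewrite $\Delta_1(x)$ and $\Delta_2(x)$ in the PBW basis $\{u^iw_1^jd^k\}$ produces two families of linear recurrences among the coefficients $c_{ijk}$ and $c'_{ijk}$, with coefficients built from $s_{ij}=1-r_1^jr_2^i$, the $\phi_p$, and $\beta+r_1^jr_2^l$. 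The genericity hypothesis (F1) guarantees that the relevant scalar factors $s_{ij}$ vanish only at the exceptional indices, which is what lets the recurrences collapse. I would exploit the coboundary freedom: subtracting $d_0^*(b)$ for a suitable $b=\sum b_{ijk}u^iw_1^jd^k$ lets me normalize the cocycle, killing all components outside a small distinguished set. Concretely, $d_0^*(b)=U|(ub-bu)+D|(db-bd)$, and the computations \eqref{eq:1stcohh0}–\eqref{eq:2ndcohh0} already record $db-bd$ and $bu-ub$ in the PBW basis, so the coboundaries are understood with no extra effort.

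The strategy is therefore to use coboundaries to reduce an arbitrary cocycle to a normal form supported on low-degree monomials, then use the cocycle equations to pin the surviving coefficients down to a two-parameter family, and finally identify that family with the span of $D|d$ and $U|u$. Linear independence of the two classes in cohomology is the easy endpoint: I would check that no nonzero combination $\lambda(D|d)+\mu(U|u)$ lies in $\im d_0^*$ by comparing, via \eqref{eq:1stcohh0} and \eqref{eq:2ndcohh0}, the coefficients of $u$ and $d$ (the degree-one monomials) and noting that any coboundary $d_0^*(b)$ with $b$ a scalar vanishes while any $b$ involving $u,d,w_1$ produces terms of the wrong multidegree, so the only way to hit $\lambda(D|d)+\mu(U|u)$ is $\lambda=\mu=0$.

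The main obstacle will be the bookkeeping in the reduction step: solving the coupled recurrences for $\{c_{ijk}\}$ and $\{c'_{ijk}\}$ simultaneously, since $\Delta_1$ and $\Delta_2$ each mix both $a$ and $a'$, so unlike the single-variable recurrences in Propositions \ref{prop:cohh0} and \ref{prop:cohh3} I must track how a coboundary adjustment chosen to simplify the $U|$-component perturbs the $D|$-component and vice versa. The care needed is to choose the normalizing coboundary $b$ so that the two reductions are compatible; I expect this to hinge on the genericity hypothesis ensuring that the only simultaneous obstruction to clearing everything lives precisely in special degree $0$ at the monomials $u$ and $d$, which is exactly where $U|u$ and $D|d$ sit.
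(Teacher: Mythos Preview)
Your proposal is correct and follows essentially the same two-step strategy as the paper: first use the coboundaries to reduce to a normal form, then apply the cocycle condition to kill everything but the classes of $U|u$ and $D|d$. The paper makes one tactical simplification you do not mention: it uses the \emph{full} bigrading (usual degree and special degree together), not just the special grading. Homogeneity for the special degree forces $x=x_U+x'_D$ or $x=x_D+x'_U$ as you note, but homogeneity for the usual degree then pins down $x'_D$ and $x'_U$ to at most a single monomial $D|d^{k+1}$ or $U|u^{i+1}$. This splits the problem into four very concrete cases in which the $U|$- and $D|$-components are almost entirely decoupled, so the ``coupled recurrences'' you flag as the main obstacle essentially disappear: in cases (i) and (iii) one computes $\Delta_2(x_D)$ or $\Delta_1(x_U)$ alone, and in the mixed cases (ii) and (iv) the extra term contributes only a single coefficient that is dispatched first. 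Your plan would work, but exploiting both gradings up front buys you a much cleaner bookkeeping.
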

% %%%%%%%
We shall first prove the following intermediate result.
%%%%%%%
\begin{lemma}
Under the same assumptions of the proposition, the $\KK$-vector space  
$(V^{*} \otimes A)/\operatorname{Im}(d^{*}_{0})$ is spanned by the classes of 
the elements of the set
\[     \mathscr{S} =  \{ U | u^{l} , D | d^{l} : l \in \NN \} \cup \{ U | u^{i} w_{1}^{j} d^{k} : i - k \leq 0 \} \cup \{ D | u^{i} w_{1}^{j} d^{k} : i - k \geq -1 \}.     \] 
\end{lemma}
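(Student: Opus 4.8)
The plan is to show that every element of $V^*\otimes A$ is equivalent, modulo the image of $d_0^*$, to a $\KK$-linear combination of elements of $\mathscr S$. Since $d_0^*$ respects the bidegree, it suffices to work within each bidegree-homogeneous component, so I may start from an arbitrary element of the form $U|u^iw_1^jd^k + D|u^{i'}w_1^{j'}d^{k'}$ (finite sums) and use the relations coming from $d_0^*$ as rewriting rules. Recall from \eqref{eq:d1*} that for $a=u^iw_1^jd^k$ we have
\[
d_0^*(a) = U|(ua-au) + D|(da-ad),
\]
and the explicit commutators are exactly those computed in \eqref{eq:2ndcohh0} and \eqref{eq:1stcohh0}. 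Reading these off, modulo $\im(d_0^*)$ one can trade the $U$-component of $d_0^*(u^iw_1^jd^k)$ against its $D$-component. Concretely, applying $d_0^*$ to $u^iw_1^jd^k$ produces, in the $U$-slot, the combination $(1-r_1^jr_2^k)u^{i+1}w_1^jd^k - \tfrac{\phi_{k-1}}{r_1}u^iw_1^{j+1}d^{k-1}$ and, in the $D$-slot, $(1-r_1^jr_2^i)u^iw_1^jd^{k+1} - \tfrac{\phi_{i-1}}{r_1}u^{i-1}w_1^{j+1}d^k$. These are the two families of relations I will exploit.

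First I would handle the $U$-component. Using the $U$-slot relation as a rewriting rule, the genericity hypothesis guarantees that $1-r_1^jr_2^k\neq0$ whenever $(j,k)\neq(0,0)$, so one can always solve for the ``highest'' monomial $U|u^{i+1}w_1^jd^k$ in terms of $U|u^iw_1^{j+1}d^{k-1}$ together with a correction term living in the $D$-component (which I postpone). Iterating this rule decreases $k$ while increasing $j$, and it terminates: starting from $U|u^iw_1^jd^k$ with $i-k>0$, repeated rewriting pushes all of it into monomials with $i-k\leq0$, except for the purely $d$-free case which collapses to the reduced generators $U|u^l$. The analysis is symmetric for the $D$-component: the $D$-slot relation, again via genericity ($1-r_1^jr_2^i\neq0$ for $(i,j)\neq(0,0)$), lets me rewrite $D|u^iw_1^jd^{k+1}$ in terms of $D|u^{i-1}w_1^{j+1}d^k$, decreasing $i$ while increasing $j$, so that every $D$-monomial is reduced to one with $i-k\geq-1$ or to a pure power $D|d^l$. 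The asymmetry in the final constraints (namely $i-k\le0$ for $U$ versus $i-k\ge-1$ for $D$) arises precisely because the two commutator formulas shift the exponents by different amounts, so I expect the bookkeeping to pin down exactly these two inequalities.

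The main obstacle will be the interaction between the two slots: rewriting a $U$-monomial via $d_0^*$ necessarily introduces a $D$-monomial, and conversely, so the two reduction procedures are coupled rather than independent. The careful point is to run the reductions in an order that is provably well-founded on the coupled system; I would argue by induction on a suitable degree, for instance on $\max(i,k)$ or on $|i-k|$, so that each application of a rewriting rule strictly decreases the induction parameter on the newly created terms and the process cannot cycle. One must also verify that the ``boundary'' monomials surviving the reductions are exactly those in $\mathscr S$ and that no further relation among them is forced by $d_0^*$ at this stage; linear independence of the classes is not claimed here, only that $\mathscr S$ spans, so it is enough to check that every monomial can be driven into $\mathscr S$. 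Once the termination and the exact form of the surviving exponent constraints are established, the spanning statement follows directly, completing the proof of the lemma.
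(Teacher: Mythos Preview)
Your overall strategy---use the relations $d_0^*(u^i w_1^j d^k)$ as rewriting rules---is the same as the paper's, but two points in your write-up need repair.

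First, the claim that iterating the $U$-rewriting ``pushes all of it into monomials with $i-k\le 0$'' is not right. The step $U|u^a w_1^b d^c \rightsquigarrow U|u^{a-1}w_1^{b+1}d^{c-1}$ preserves $a-c$, so you never reach $a-c\le 0$ on the $U$-side. What actually happens is that you iterate until $c=0$; at that point the second $U$-term vanishes (because $\phi_{-1}=0$), and one more application of $d_0^*(u^{a-1}w_1^b)$ with $b>0$ expresses $U|u^a w_1^b$ purely as $D$-terms plus an element of $\operatorname{Im}(d_0^*)$, using that $1-r_1^b\neq 0$ by genericity. The symmetric story holds for the $D$-side.

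Second, your ``main obstacle''---the coupling between the $U$- and $D$-slots---is in fact a non-issue, and noticing this is precisely what makes the paper's argument clean. When you reduce a $U$-monomial $U|u^a w_1^b d^c$ with $a-c\ge 1$ via $d_0^*(u^{a-1}w_1^b d^c)$, the $D$-terms produced are $D|u^{a-1}w_1^b d^{c+1}$ and $D|u^{a-2}w_1^{b+1}d^c$, both of which satisfy $i'-k'=a-c-2\ge -1$ and hence already lie in $\mathscr{S}$. Symmetrically, when you reduce a $D$-monomial with $a'-c'\le -2$ via $d_0^*(u^{a'}w_1^{b'}d^{c'-1})$, the $U$-terms produced have $i-k=a'-c'+2\le 0$ and are already in $\mathscr{S}$. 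So the two reductions are completely decoupled once you observe this index shift, and no joint induction on $\max(i,k)$ or $|i-k|$ is needed: each side finishes independently, dumping only $\mathscr{S}$-terms into the other slot.
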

%%%%%%%
\begin{proof}
Note that there is some redundancy in our description of $\mathscr{S}$, since 
for example $D|d$ belongs both to the first and to the third subset of the union. 
If $a = u^{i} w_{1}^{j} d^{k}$, then $d_{0}^{*}(a)$ equals  
\begin{multline}
\label{eq:1stcohh1} 
     U | \Big( (1 - r_{1}^{j} r_{2}^{k}) u^{i+1} w_{1}^{j} d^{k} - \frac{\phi_{k-1}(r_{1},r_{2})}{r_{1}} u^{i} w_{1}^{j+1} d^{k-1} \Big) \\
     - D | \Big( (1 - r_{1}^{j} r_{2}^{i}) u^{i} w_{1}^{j} d^{k+1} - \frac{\phi_{i-1}(r_{1},r_{2})}{r_{1}} u^{i-1} w_{1}^{j+1} d^{k} \Big).     
\end{multline}
Suppose $x_{a,b,c} = U | u^{a} w_{1}^{b} d^{c} \notin \mathscr{S}$, 
then $b + c > 0$ and $a - c > 0$. 
We shall show that $x_{a,b,c}$ belongs to the subspace spanned by $\mathscr{S}$ 
and $\operatorname{Im}(d^{*}_{0})$. 
In order to do so, first notice that Equation \eqref{eq:1stcohh1} for $i=a-1$, 
$j=b$ and $k=0$ tells us that $x_{a,b,0} = U | u^{a} w_{1}^{b}$ belongs to the 
subspace spanned by $\mathscr{S}$ and $\operatorname{Im}(d^{*}_{0})$, since for 
$b>0$ the coefficient of $x_{a,b,0}$ in \eqref{eq:1stcohh1} is nonzero 
by the hypothesis of genericity. 
Moreover, Equation \eqref{eq:1stcohh1} for $i=a-1$, $j=b$ and $k=c$ tells us 
that $x_{a,b,c} = U | u^{a} w_{1}^{b} d^{c}$ belongs to the subspace spanned by 
$x_{a-1,b+1,c-1}$, the set $\mathscr{S}$ and $\operatorname{Im}(d^{*}_{0})$, 
because, for $b+c>0$, the coefficient of $x_{a,b,c}$ in 
\eqref{eq:1stcohh1} is nonzero due to the hypothesis of genericity.
By a recursive argument we prove that $x_{a,b,c} = U | u^{a} w_{1}^{b} d^{c}$ 
belongs to the subspace spanned by $\mathscr{S}$ and 
$\operatorname{Im}(d^{*}_{0})$.

Analogously, let $x'_{a',b',c'} = D | u^{a'} w_{1}^{b'} d^{c'} \notin \mathscr{S}$. 
Thus, $a' + b' > 0$ and $a' - c' < -1$. 
We claim that $x'_{a',b',c'}$ belongs to the subspace spanned by the set 
$\mathscr{S}$ and $\operatorname{Im}(d^{*}_{0})$. 
Indeed, first notice that Equation \eqref{eq:1stcohh1} for $i=0$, $j=b'$ and 
$k=c'-1$ implies that 
$x'_{0,b',c'} = D | w_{1}^{b'} d^{c'} $ belongs to the subspace spanned by 
$\mathscr{S}$ and $\operatorname{Im}(d^{*}_{0})$, since for $b>0$ the 
coefficient of $x'_{0,b',c'}$ in \eqref{eq:1stcohh1} is nonzero by the 
hypothesis of genericity. 
Furthermore, Equation \eqref{eq:1stcohh1} for $i=a'$, $j=b'$ and $k=c'-1$ 
implies that 
$x'_{a',b',c'} = D | u^{a'} w_{1}^{b'} d^{c'}$ belongs to the subspace spanned 
by $x'_{a'-1,b'+1,c'-1}$, the set $\mathscr{S}$ and 
$\operatorname{Im}(d^{*}_{0})$, using that for $b'+c'>0$ the coefficient of 
$x'_{a',b',c'}$ in \eqref{eq:1stcohh1} is nonzero by the hypothesis of 
genericity, for $b' + c' > 0$. 
A recursive argument allows us to conclude that $x'_{a',b',c'} = D | u^{a} 
w_{1}^{b} d^{c'}$ belongs to the subspace spanned by $\mathscr{S}$ and 
$\operatorname{Im}(d^{*}_{0})$.
\end{proof} 
Since \eqref{eq:coHHA} is a complex, the differential $d_{1}^{*}$ trivially 
induces a map $\bar{d}_{1}^{*}$ from $(V^{*} \otimes 
A)/\operatorname{Im}(d^{*}_{0})$ to $R^{*} \otimes A$, whose kernel is the 
Hochschild cohomology space $HH^{1}(A)$. 
It is easy to prove that the classes of $U | u$ and $D | d$ belong to the 
kernel of $\bar{d}_{1}^{*}$, 
and that they are linearly independent, since the intersection between the 
$\KK$-vector subspace of $V^{*} \otimes A$ 
spanned by $U | u$ and $D | d$ and  $\operatorname{Im}(d^{*}_{0})$ is trivial, 
by degree reasons. In order to complete the proof of Proposition 
\ref{prop:cohh1} it suffices thus to prove the following result. 
% %%%%%%%
\begin{lemma}
Assume $A$ is a down-up algebra of the family (F1).
Define $W$ to be the $\KK$-vector subspace of $(V^{*} \otimes A)/\operatorname{Im}(d^{*}_{0})$ 
spanned by the classes of the elements of the family $\mathscr{S'}$ given by
\[      \{ U | u^{l}, D | d^{l} : l \in \NN_{\geq 2} \} \cup \{ U | u^{i} w_{1}^{j} d^{k} : i - k \leq 0 \} \cup \{ D | u^{i} w_{1}^{j} d^{k} : i - k \geq -1 \} \setminus \{ D | d \}.     \] 
The intersection $W \cap \operatorname{Ker}(\bar{d}_{1}^{*})$ is trivial. 
\end{lemma}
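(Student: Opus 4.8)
The plan is to use the bidegree-homogeneity of $d_1^*$ to turn the statement into a finite linear algebra problem in each bidegree, and then to solve the resulting recurrences by means of the genericity hypothesis, in the spirit of Proposition \ref{prop:cohh0}. Since $d_1^*$ has bidegree zero and every generator in $\mathscr{S'}$ is bidegree-homogeneous, it suffices to fix a bidegree and to show that the corresponding homogeneous component of $W \cap \Ker(\bar{d}_1^*)$ is trivial. I would write a homogeneous representative as $\xi = U \otimes a + D \otimes a'$; homogeneity of $\xi$ forces the monomials $u^i w_1^j d^k$ occurring in $a$ to share a fixed value of $i-k$ (the $s$-degree of $a$) and a fixed value of $i+2j+k$, and likewise for $a'$. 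Hence in each bidegree the supports of $a$ and of $a'$ are finite families indexed by the single parameter $i$, so that this component of $W$ is finite dimensional. The inequalities defining $\mathscr{S'}$ (that $i-k\leq 0$ for the $U$-part and $i-k\geq -1$ for the $D$-part, together with the conditions $l\geq 2$ for $U|u^l,\ D|d^l$ and the removal of $D|d$) become restrictions on which of these families actually appears, according to the signs of the $s$-degrees of $a$ and $a'$; I would organize the argument as a short case analysis on these signs.

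Next I would impose the two equations $\Delta_1(\xi)=0$ and $\Delta_2(\xi)=0$ that express $\bar{d}_1^*([\xi])=0$. Expanding $\Delta_1$ and $\Delta_2$ on the PBW basis $\{u^iw_1^jd^k\}$ by means of the commutation identities of Lemma \ref{fact:comm}, each target monomial turns out to be reached from at most three consecutive values of the index $i$, so the vanishing of every coefficient yields two coupled families of linear recurrences for the coefficients of $a$ and $a'$. A direct computation shows that the leading coefficient of the contribution of $a$ to $\Delta_1$ equals $(r_1^jr_2^i - r_1)(r_1^jr_2^i - r_2)$, and that of the contribution of $a'$ to $\Delta_2$ equals $(r_1^jr_2^k - r_1)(r_1^jr_2^k - r_2)$, the subleading coefficients being scalar multiples of $\phi_\bullet/r_1$. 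By the genericity hypothesis these leading factors are nonzero except when $r_1^jr_2^i$ (respectively $r_1^jr_2^k$) equals $r_1$ or $r_2$, that is, at the isolated indices $(i,j)\in\{(1,0),(0,1)\}$ (respectively $(k,j)\in\{(1,0),(0,1)\}$). Since the coefficients of $a$ and $a'$ vanish as soon as the index leaves the admissible range dictated by nonnegativity of the exponents, I would propagate each recurrence inward from that boundary, using the nonvanishing of the leading factor to express a coefficient through its neighbours and those of the coupled sequence, and so, step by step, to force its vanishing.

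The main obstacle is the bookkeeping at the finitely many exceptional indices where a leading factor degenerates, namely $(i,j)=(1,0)$ and $(i,j)=(0,1)$ and their analogues for $a'$. These are exactly the places where the recurrences can fail to force vanishing, and they correspond to the monomials $u$ and $d$, hence to the classes $U|u$ and $D|d$ which have been deliberately removed from $\mathscr{S'}$; this removal is what prevents a nonzero class from surviving. The delicate point is that at the index $(0,1)$ the leading coefficient of $\Delta_1$ vanishes although the monomial $U|w_1$ does belong to $\mathscr{S'}$, so the corresponding vanishing cannot be read off from $\Delta_1$ alone: here one must play the two equations off against each other and invoke $\Delta_2=0$ (together with the exclusion of $D|d$ and the conditions $l\geq 2$) to kill the contribution. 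Once the recurrences have been exploited in every bidegree, I would conclude — reducing representatives modulo $\im(d_0^*)$ through the identities \eqref{eq:1stcohh1} already used in the spanning lemma — that the class $[\xi]$ vanishes, and therefore that $W \cap \Ker(\bar{d}_1^*)=0$, as claimed.
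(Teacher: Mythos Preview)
Your proposal is correct and follows essentially the same approach as the paper: reduce to a single bidegree, split into cases according to the sign of the special degree of the $U$- and $D$-parts, expand $\Delta_1$ and $\Delta_2$ on the PBW basis to obtain three-term recurrences with leading coefficient $(r_1^jr_2^i-r_1)(r_1^jr_2^i-r_2)$ (resp.\ with $k$ in place of $i$), propagate vanishing from the boundary using genericity, and at the exceptional indices $(i,j)\in\{(1,0),(0,1)\}$ combine both equations and reduce modulo $\operatorname{Im}(d_0^*)$. The paper makes the case split explicit as four cases (i)--(iv), and in the exceptional cases (ii) and (iv) shows that the surviving two-dimensional piece is exactly a scalar multiple of $d_0^*(u^i)$ (resp.\ $d_0^*(d^k)$); your final paragraph is the outline of precisely that step.
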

% %%%%%%%
\begin{proof}
Let $x$ be an element of $(V^{*} \otimes A)$ given by a finite linear combination of the form 
\[     x = \underset{x_{U}}{\underbrace{\sum_{i-k \leq 0} c_{i,j,k} U | u^{i} w_{1}^{j} d^{k}}} + \underset{x_{D}}{\underbrace{\sum_{i'-k' \geq -1} c'_{i',j',k'} D | u^{i'} w_{1}^{j'} d^{k'}}} + \underset{x'_{U}}{\underbrace{\sum_{l \geq 2} a_{l} U | u^{l}}} + \underset{x'_{D}}{\underbrace{\sum_{l' \geq 2} a'_{l'} D | d^{l'}}},  \]
where we exclude the case $(i',j',k') = (0,0,1)$ in the second sum. 
Since the image under $\bar{d}_{1}^{*}$ of the class of $x$ in $(V^{*} \otimes A)/\operatorname{Im}(d^{*}_{0})$ 
coincides with the image under $d_{1}^{*}$ of $x$, it suffices to prove that 
the vanishing of this last image implies that that the class of $x$ in $(V^{*} 
\otimes A)/\operatorname{Im}(d^{*}_{0})$ vanishes.
Without loss of generality we may take $x$ homogeneous for the bigrading, since 
$d_{1}^{*}$ is homogeneous of bidegree zero. 
Being homogeneous for the special degree implies that either $x = x_{U}+x'_{D}$ 
or $x = x_{D}+x'_{U}$, while being homogeneous for the usual degree restricts 
$x$ to one of the following cases: 
\begin{itemize}
\item[(i)] $x = x_{U}$ such that $\mathrm{deg}(x_{U}) + \mathrm{s\text{-}deg}(x_{U}) \neq 0$;
\item[(ii)] $x = c_{0,1,k-1} U | w d^{k-1} + c_{1,0,k} U | u d^{k} + a'_{k+1} D | d^{k+1}$, for $k \geq 1$; 
\item[(iii)] $x = x_{D}$ such that $\mathrm{deg}(x_{D}) \neq \mathrm{s\text{-}deg}(x_{D})$;
\item[(iv)] $x = c'_{i-1,1,0} D | u^{i-1} w + c'_{i,0,1} D | u^{i} d + a_{i+1} U | u^{i+1}$, for $i \geq 1$. 
\end{itemize}
Let us first consider case (iii). 
By definition of $d_{1}^{*}$, we write 
$d_{1}^{*}(x) = D^{2}U | \Delta_{1}(x) + DU^{2} | \Delta_{2}(x)$. 
An explicit computation using formulas given in Fact \ref{fact:comm} leads to 
\begin{equation*}
\begin{split}
     \Delta_{2}(x) 
     = \sum_{i'-k' \geq -1} c_{i',j',k'} &\Big( \frac{\phi_{k'-1}(r_{1},r_{2}) \phi_{k'-2}(r_{1},r_{2})}{r_{1}^{2}} u^{i'} w_{1}^{j'+2} d^{k'-2} 
     \\
     &+ \frac{\alpha\phi_{k'-1}(r_{1},r_{2})}{r_1}(r_{1}^{j'} r_{2}^{k'-1} - 1) u^{i'+1} w_{1}^{j'+1} d^{k'-1}
     \\
      &+ (r_{1}^{2 j'} r_{2}^{2 k'} - \alpha r_{1}^{j'} r_{2}^{k'} - \beta) u^{i'+2} w_{1}^{j'} d^{k'} \Big),     
\end{split}
\end{equation*}
and the coefficient of the monomial $u^{a} w_{1}^{b} d^{c}$ (where 
$a,b,c \geq 0$) is thus
\begin{multline*}
     \frac{\phi_{c+1}(r_{1},r_{2}) \phi_{c}(r_{1},r_{2})}{r_{1}^{2}} c_{a,b-2,c+2}  
     \\+ \frac{\alpha \phi_{c}(r_{1},r_{2})}{r_1} (r_{1}^{b-1} r_{2}^{c} - 1) c_{a-1,b-1,c+1} +  (r_{1}^{2 b} r_{2}^{2 c} - \alpha r_{1}^{b} r_{2}^{c} - \beta) c_{a-2,b,c}.      
\end{multline*}
The fact that $x$ belongs to the kernel of $d_{1}^{*}$ implies the vanishing of 
the previous expression. 
In particular, we see that 
\begin{align*}
&c_{a,0,c} = 0\text{ for all }c \neq 1,\\
&c_{a,1,c} = 0\text{ for all }c \neq 0,\text{ and }\\
&c_{a,b,c} = 0\text{ for all }b \geq 2.
\end{align*}
Condition $\mathrm{deg}(x_{D}) \neq \mathrm{s\text{-}deg}(x_{D})$ implies that $c_{a,0,1} 
= 0$ 
and $c_{a,1,0} = 0$ for all $a$. 
As a consequence, $x_{D}$ vanishes in $(V^{*} \otimes A)$. Case (i) is 
handled \textit{mutatis mutandi}. 

Let us now treat case (iv), where 
\[     x = c'_{i-1,1,0} D | u^{i-1} w + c'_{i,0,1} D | u^{i} d + a_{i+1} U | u^{i+1},     \] 
for some $i \geq 1$. 
We write again$d_{1}^{*}(x) = D^{2}U | \Delta_{1}(x) + DU^{2} | \Delta_{2}(x)$. 
Using the computations of the previous paragraph we see that $\Delta_{2}(D | 
u^{i-1} w)$ and $\Delta_{2}(D | u^{i} d)$ vanish. 
The expression of $d_{1}^{*}$ in \eqref{eq:d1*} together with the formulas of 
Fact \ref{fact:comm} tell us that $\Delta_{2}(U | u^{i+1})$ is given by 
\[     \frac{(r_{1}^{i+1} + r_{2}^{i+1}-\alpha)}{r_{1}} u^{i + 1} w + r_{2} (r_{1} - r_{2}) (1 - r_{2}^{i}) u^{i+2} d.     \]
Since the second coefficient of the previous expression is nonzero by the 
hypothesis of genericity, we see that 
the vanishing of $\Delta_{2}(x)$ implies that $a_{i+1}$ is zero, which we 
will assume from now on.  
We shall now turn our attention to $\Delta_{1}(x)$, for $x = c'_{i-1,1,0} D | 
u^{i-1} w + c'_{i,0,1} D | u^{i} d$. 
Using again the formulas of Fact \ref{fact:comm}, we see that 
\[     \Delta_{1}(D | u^{i-1} w) 
= \frac{1-r_{2}^{i}}{r_{1}} \left( u^{i-1} w^{2} + r_{1}^{2} (r_{2}-r_{1}) u^{i} w d \right),     \]
and 
\[     \Delta_{1}(D | u^{i} d) = \frac{\phi_{i-1}(r_{1},r_{2})}{r_{1}^{2}} \left( u^{i-1} w^{2} + r_{1}^{2} (r_{2}-r_{1}) u^{i} w d \right).     \]
The hypothesis of genericity implies that all the coefficients 
appearing in both of the previous expressions are nonzero. 
We note however that $\Delta_{1}(D | u^{i-1} w)$ and $\Delta_{1}(D | u^{i} d)$ 
are not linearly independent, and so $x$ is a multiple of $\phi_{i-1} D | 
u^{i-1} w - r_{1} (1-r_{2}^{i}) D | u^{i} d$. 
Since $\phi_{i-1} D | u^{i-1} w - r_{1} (1-r_{2}^{i}) D | u^{i} d$ is also a multiple of 
$d_{0}^{*}(u^{i})$ (see \eqref{eq:1stcohh1}), we conclude that the class of $x$ in $(V^{*} 
\otimes A)/\operatorname{Im}(d^{*}_{0})$ vanishes.  
Case (ii) is analogous.
\end{proof}

We now turn to a characterization of the space $HH^2(A(\alpha,\beta,0))$. 
As before, let $A$ denote the algebra $A(\alpha,\beta,0)$ and let $A(t,s)$ denote the 
Hilbert series of $A$ regarded as a bigraded algebra. This bigrading on $A$ 
induces a bigrading on its Hochschild cohomology, whose associated Hilbert 
series will be denoted by $HH^i(A)(t,s)$ for all $i\geq0$.

\begin{proposition}
\label{prop:serie_hilbert_HH2}
 Under the previous assumptions,
 \begin{equation*}
  HH^2(A)(t,s) = \frac{1}{t^2} + 2 + \frac{t^2}{1-t^2}.
 \end{equation*}
\end{proposition}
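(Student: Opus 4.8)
The plan is to bypass a direct description of $\Ker(d_2^*)/\im(d_1^*)$ and instead extract $HH^2(A)(t,s)$ from the bigraded Euler--Poincar\'e characteristic of the complex \eqref{eq:coHHA}, now that $HH^0$, $HH^1$ and $HH^3$ are known. Since every differential in \eqref{eq:coHHA} is homogeneous of bidegree zero and every term has finite-dimensional bigraded components (the usual-degree components of $A$ are finite-dimensional, and $V^*, R^*, \Omega^*$ are finite-dimensional), the alternating sum of the Hilbert series of the terms equals that of the cohomology:
\[
A(t,s) - V^*(t,s)A(t,s) + R^*(t,s)A(t,s) - \Omega^*(t,s)A(t,s) = \sum_{i=0}^{3}(-1)^i HH^i(A)(t,s).
\]
Dualizing the bidegrees of the generators $d,u$ of $V$, of $d^2u,du^2$ of $R$ and of $d^2u^2$ of $\Omega$, I would record $V^*(t,s) = t^{-1}(s+s^{-1})$, $R^*(t,s) = t^{-3}(s+s^{-1})$ and $\Omega^*(t,s) = t^{-4}$.

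Next I would simplify the left-hand side. Writing $P = s + s^{-1}$, the common factor is
\[
1 - t^{-1}P + t^{-3}P - t^{-4} = t^{-4}\bigl(t^4 - t^3P + tP - 1\bigr) = -t^{-4}\bigl(1 - tP + t^3P - t^4\bigr),
\]
and by \eqref{eq:HilbSerAtotal} the last factor is precisely $A(t,s)^{-1}$. Hence the entire left-hand side collapses to $-t^{-4}$, with no dependence on $s$.

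It then suffices to insert the three known cohomology series and solve. Proposition \ref{prop:cohh0} gives $HH^0(A)(t,s)=1$, Proposition \ref{prop:cohh1} gives $HH^1(A)(t,s)=2$ (the classes $U|u$ and $D|d$ both lie in bidegree $(0,0)$), and from the basis in Proposition \ref{prop:cohh3} one reads that every generator sits in special degree zero, with $D^2U^2|w_1^j$ in usual degree $2j-4$ and $D^2U^2|uw_1d$ in usual degree $0$; summing the resulting geometric series gives $HH^3(A)(t,s)=t^{-4}/(1-t^2)$. Solving the identity for the remaining term,
\[
HH^2(A)(t,s) = -t^{-4} - HH^0(A)(t,s) + HH^1(A)(t,s) + HH^3(A)(t,s) = 1 - t^{-4} + \frac{t^{-4}}{1-t^2} = 1 + \frac{1}{t^2(1-t^2)},
\]
which rearranges to $t^{-2}+2+t^2/(1-t^2)$, as claimed. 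The argument is mostly bookkeeping, and the step I would check most carefully is the bigraded Hilbert series of $HH^3(A)$: the whole computation reduces to a single clean identity only because the term series telescope against the special shape of \eqref{eq:HilbSerAtotal}, so any error in the bidegrees of the $HH^3$ basis would feed straight into the final formula.
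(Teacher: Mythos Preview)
Your argument is correct and follows essentially the same route as the paper: compute the bigraded Euler--Poincar\'e characteristic of the complex \eqref{eq:coHHA}, observe it collapses to $-t^{-4}$ via \eqref{eq:HilbSerAtotal}, insert the known series for $HH^0$, $HH^1$ and $HH^3$, and solve for $HH^2$. Your version is slightly more explicit in writing out the term series $V^*(t,s)$, $R^*(t,s)$, $\Omega^*(t,s)$ and in verifying the $HH^3$ series from the basis of Proposition~\ref{prop:cohh3}, but the substance is the same.
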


\begin{proof}
 Let $C^\bullet$ be the complex \eqref{eq:coHHA}. 
 Recall that the homology of $C^\bullet$
 is $HH^\bullet(A)$. Regarding 
$HH^\bullet(A)$ as a complex with zero differentials, the Euler-Poincar\'e 
characteristic $\chi_{C^{\bullet}}(t,s)$ associated to $C^\bullet$ is equal to the 
Euler-Poincar\'e characteristic 
$\chi_{HH^\bullet(A)}(t,s)$ associated to $HH^\bullet(A)$. Using the 
descriptions we obtained of $C^\bullet$ and of the Hochschild cohomology spaces 
$HH^0(A)$, $HH^1(A)$ and $HH^3(A)$, the following equalities are 
straightforward to check,
\begin{align*}
&\chi_{C^{\bullet}}(t,s) = -t^{-4}, \\
&HH^0(A)(t,s) = 1,\\
&HH^1(A)(t,s) = 2,\\
&HH^3(A)(t,s) = \frac{1}{t^4(1-t^2)}.
\end{align*}
Therefore, the equality $\chi_{C^{\bullet}}(t,s) = \chi_{HH_\bullet(A)}(t,s)$ 
is 
\[
 -\frac{1}{t^4} = 1 - 2 + HH^2(A)(t,s) - \frac{1}{t^4(1-t^2)},
\]
and the lemma follows.
\end{proof}
From the previous lemma we deduce that every homogeneous component of $HH^2(A)$ 
of bidegree different from $(2k,0)$ for $k\geq-1$ is zero. 

The following set is a $\KK$-basis of the homogeneous component of $R^*\otimes 
A$ of bidegree $(2k,0)$
\[
\{D^2U|u^aw^{k+1-a}d^{a+1} : 0\leq a\leq k+1\}
\cup
\{DU^2|u^{a+1}w^{k+1-a}d^a : 0\leq a\leq k+1\}.
\]

\begin{proposition}
\label{prop:primeros_grados}
 The homogeneous component of $HH^2(A)$ of bidegree $(-2,0)$ is 
isomorphic to the $\KK$-vector space spanned by the class of the element 
$D^2U|d + DU^2|u$. 
On the other hand, the homogeneous component of $HH^2(A)$ of bidegree $(0,0)$ 
is isomorphic to the $\KK$-vector space spanned by the classes of the elements 
$D^2U|wd + DU^2|uw$ and $D^2U|ud^2 + DU^2|u^2d$.
\end{proposition}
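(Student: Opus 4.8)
The plan is to combine the dimension count from Proposition \ref{prop:serie_hilbert_HH2} with a couple of explicit cocycle computations. The Hilbert series $HH^2(A)(t,s)=t^{-2}+2+t^2/(1-t^2)$ shows that the homogeneous component of $HH^2(A)$ in bidegree $(-2,0)$ is one-dimensional and that in bidegree $(0,0)$ is two-dimensional, so it will be enough to produce, in each bidegree, the correct number of linearly independent classes. Since $HH^2(A)=\Ker(d_2^*)/\im(d_1^*)$, the decisive preliminary observation is that $\im(d_1^*)$ vanishes in both of these bidegrees; granting this, the computation collapses to describing $\Ker(d_2^*)$ by means of the explicit formula \eqref{eq:d2*}. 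Throughout I use that, since $d_0^*$ and $d_1^*$ are homogeneous of bidegree zero, the symbol $U\,|\,(\place)$ shifts the bidegree of its argument in $A$ by $(-1,-1)$ and $D\,|\,(\place)$ shifts it by $(-1,1)$.

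First I would dispose of bidegree $(-2,0)$. A monomial $U\,|\,a$ or $D\,|\,a'$ of usual degree $-2$ would require $\deg(a)=-1$ or $\deg(a')=-1$, which is impossible; hence $(V^*\otimes A)_{(-2,0)}=0$, so $\im(d_1^*)_{(-2,0)}=0$ and $HH^2(A)_{(-2,0)}=\Ker(d_2^*)_{(-2,0)}$. It then remains only to check with \eqref{eq:d2*} that $d_2^*(D^2U\,|\,d+DU^2\,|\,u)=D^2U^2\,|\,(du+\beta ud-du-\beta ud)=0$. Being a nonzero cocycle inside a one-dimensional space, its class forms a basis.

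For bidegree $(0,0)$ the same bookkeeping shows that $(V^*\otimes A)_{(0,0)}$ is spanned by $U\,|\,u$ and $D\,|\,d$, the only monomials of the required usual and special degree. The crucial point is that both are already $d_1^*$-cocycles: feeding them into \eqref{eq:d1*} and invoking the defining relations \eqref{eq:relationsdownup} gives $\Delta_1(U\,|\,u)=d^2u-\alpha dud-\beta ud^2=0$ and $\Delta_2(U\,|\,u)=2(du^2-\alpha udu-\beta u^2d)=0$, whence $d_1^*(U\,|\,u)=0$, and symmetrically $d_1^*(D\,|\,d)=0$; these are precisely the classes spanning $HH^1(A)$ in Proposition \ref{prop:cohh1}. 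Therefore $\im(d_1^*)_{(0,0)}=0$ and $HH^2(A)_{(0,0)}=\Ker(d_2^*)_{(0,0)}$, a two-dimensional space. I would then verify that the two proposed elements are cocycles. For $D^2U\,|\,w_1d+DU^2\,|\,uw_1$, formula \eqref{eq:d2*} returns $D^2U^2\,|\,(d(uw_1)+\beta uw_1d-w_1du-\beta uw_1d)$, where the two $\beta$-terms cancel and the permutation rules \eqref{eq:permws} give $d(uw_1)=w_1du$, so the output is $0$. For $D^2U\,|\,ud^2+DU^2\,|\,u^2d$ the two $\beta u^2d^2$ terms cancel and Lemma \ref{fact:comm} yields $du^2d=ud^2u$, again giving $0$. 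Linear independence of the two classes is then immediate from their $D^2U$-components $w_1d$ and $ud^2$, which are distinct elements of the PBW basis $\{u^iw_1^jd^k\}$ of $A$; hence they form a basis of $HH^2(A)_{(0,0)}$.

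The only genuinely delicate parts are the two cocycle verifications in bidegree $(0,0)$, where the cancellations are not term-by-term but rest on the commutation relations \eqref{eq:permws} and the identities of Lemma \ref{fact:comm}. I would arrange each computation so that the $\beta$-weighted summands cancel first, isolating the nontrivial identities $d(uw_1)=w_1du$ and $du^2d=ud^2u$; everything else is dimension bookkeeping powered by the preceding Hilbert series computation.
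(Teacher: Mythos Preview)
Your proof is correct and follows essentially the same approach as the paper's: both show that $\im(d_1^*)$ vanishes in the two bidegrees in question, verify the proposed elements are $d_2^*$-cocycles, and appeal to the dimension count from Proposition~\ref{prop:serie_hilbert_HH2}. The only cosmetic difference is that for bidegree $(-2,0)$ the paper computes $d_2^*(\lambda\,D^2U|d+\mu\,DU^2|u)=(\mu-\lambda)\,D^2U^2|(du+\beta ud)$ directly to identify the kernel, whereas you invoke the Hilbert series for the dimension and then check a single cocycle; and in bidegree $(0,0)$ you spell out the cocycle verifications (using \eqref{eq:permws} and Lemma~\ref{fact:comm}) that the paper leaves to the reader.
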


\begin{proof}
There are no homogeneous elements of bidegree $(-2,0)$ in $V^*\otimes A$ and 
the homogeneous component of bidegree $(-2,0)$ in $R^*\otimes A$ is spanned by 
the elements $D^2U|d$ and $DU^2|u$. The first claim follows from the 
fact that $d_2^*(\lambda D^2U|d+\mu DU^2|u)=(\mu-\lambda)D^2U^2|(du + \beta 
ud)$.

The elements of $V^*\otimes A$ of bidegree $(0,0)$ are spanned by $D|d$ and 
$U|u$. We have already seen in Proposition \ref{prop:cohh1} these elements are 
in the kernel of $d_1^*$. 
On the other hand, the elements $D^2U|wd + DU^2|uw$ and $DU^2|ud^2 + DU^2|u^2d$ 
belong to the kernel of $d_2^*$ and they are linearly independent. By Proposition 
\ref{prop:serie_hilbert_HH2} their classes form a basis of the homogeneous 
component of $HH^2(A)$ of bidegree $(0,0)$.
\end{proof}

We now turn to a description of the homogeneous components of bidegree $(2k,0)$ 
with $k\geq1$. 
For all non negative integers $x,y$ and $z$, define 
\begin{align*}
&g_{x,y,z}=d_1^*(U|u^xw_1^yd^z),\\
&f_{x,y,z}=d_1^*(D|u^x w_1^y d^z).
\end{align*}

Observe that 
$\mathrm{bideg}(g_{x,y,z}) = (x+2y+z-1,x-z-1)$ 
and $\mathrm{bideg}(f_{x,y,z}) = (x+2y+z-1,x-z+1)$. 

\begin{lemma}
\label{lemma:f_igual_g}
Let $k\geq1$. The elements $f_{z,k-z,z+1}$, where $0\leq z\leq 
k$, generate the homogeneous component of $\im(d_1^*)$ of bidegree $(2k,0)$ as 
$\KK$-vector space.
\end{lemma}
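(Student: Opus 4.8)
The plan is to prove the stronger statement that for each $z$ with $0\le z\le k$ one has $g_{z+1,k-z,z}=f_{z,k-z,z+1}$; since the homogeneous component of $\im(d_1^*)$ in bidegree $(2k,0)$ is spanned by exactly those $g$'s and $f$'s lying in that bidegree, this immediately shows that the $f_{z,k-z,z+1}$ alone generate it.

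First I would pin down which generators are relevant. Because $\{u^iw_1^jd^k\}$ is a basis of $A$ and $d_1^*$ is homogeneous of bidegree zero, the bidegree-$(2k,0)$ component of $\im(d_1^*)$ is spanned by those $g_{x,y,z}$ and $f_{x,y,z}$ of that bidegree. Solving $(x+2y+z-1,x-z-1)=(2k,0)$ and $(x+2y+z-1,x-z+1)=(2k,0)$ with the stated bidegree formulas yields exactly $g_{z+1,k-z,z}$ and $f_{z,k-z,z+1}$ for $0\le z\le k$.

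The key step is to use that \eqref{eq:coHHA} is a complex, so $d_1^*\circ d_0^*=0$. Applying this to $a=u^zw_1^{k-z}d^z$ and rewriting $ua-au$ and $da-ad$ by means of \eqref{eq:1sthh0} and \eqref{eq:2ndhh0}, the vanishing of $d_1^*(d_0^*(a))$ becomes, after grouping the $U$-part into $g$-terms and the $D$-part into $f$-terms, a two-term recursion
\[
(1-r_1^{k-z}r_2^z)\,\theta_z=\frac{\phi_{z-1}}{r_1}\,\theta_{z-1},
\qquad \theta_z\colonequals g_{z+1,k-z,z}-f_{z,k-z,z+1}.
\]
Here I would check the crucial reassembly: the lower-degree monomials produced by the two commutators collect precisely into $\theta_{z-1}=g_{z,k-z+1,z-1}-f_{z-1,k-z+1,z}$, and the two scalar coefficients coincide because the $u$- and $d$-exponents of $a$ are equal (both $z$), so $r_1^{k-z}r_2^z$ appears on both sides. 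An induction on $z$ then finishes the argument: at $z=0$ the right-hand side vanishes since $\phi_{-1}=0$, while $1-r_1^k\neq0$ by the genericity hypothesis (as $k\ge1$), giving $\theta_0=0$; for $1\le z\le k$ the factor $1-r_1^{k-z}r_2^z$ is nonzero by genericity since $(k-z,z)\neq(0,0)$, so $\theta_{z-1}=0$ forces $\theta_z=0$. Hence $g_{z+1,k-z,z}=f_{z,k-z,z+1}$ for all admissible $z$, and the $f_{z,k-z,z+1}$ span the whole component.

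The only real difficulty is bookkeeping: one must index the four monomials appearing in $d_1^*(d_0^*(a))$ against the definitions of $f$ and $g$ carefully enough that the expression telescopes into the clean recursion above. Once the coefficient matching is carried out and the equality of the two scalars (valid here precisely because the exponents of $u$ and $d$ in $a$ agree) is observed, the genericity hypothesis does the rest and no further computation is needed.
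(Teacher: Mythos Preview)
Your proposal is correct and follows essentially the same route as the paper: both identify the bidegree-$(2k,0)$ generators as the $g_{z+1,k-z,z}$ and $f_{z,k-z,z+1}$, then use $d_1^*\!\circ d_0^*(u^zw_1^{k-z}d^z)=0$ to obtain the recursion $(1-r_1^{k-z}r_2^z)\theta_z=\frac{\phi_{z-1}}{r_1}\theta_{z-1}$ with $\theta_z=g_{z+1,k-z,z}-f_{z,k-z,z+1}$, and conclude by induction using genericity. The paper introduces the intermediate elements $x_z=U|u^{z+1}w_1^{k-z}d^z-D|u^zw_1^{k-z}d^{z+1}$ with $d_1^*(x_z)=\theta_z$ and writes $d_0^*(u^zw_1^{k-z}d^z)=(1-r_1^{k-z}r_2^z)x_z-\frac{\phi_{z-1}}{r_1}x_{z-1}$ before applying $d_1^*$, but this is merely a packaging of the same computation you outline.
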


\begin{proof}
The set $\{U|u^{z+1}w_1^{k-z}d^z:0\leq z\leq k\}\cup
\{D|u^zw_1^{k-z}d^{z+1}:0\leq z\leq k\}$ generates the homogeneous component of 
bidegree $(2k,0)$ of $V^*\otimes A$ and therefore the set
$\{g_{z+1,k-z,z}:0\leq z\leq k\}\cup\{ f_{z,k-z,z+1}: 0\leq z\leq k\}$ 
generates the homogeneous component of $\im(d_1^*)$ of the same bidegree. 

For $0\leq z\leq k$, define $x_z= U|u^{z+1}w_1^{k-z}d^z - 
D|u^zw_1^{k-z}d^{z+1}$. Observe that $d_1^*(x_z) = g_{z+1,k-z,z} - 
f_{z,k-z,z+1}$. 
Using the expression given in \eqref{eq:1stcohh1} for the image of an element under $d_{0}^{*}$ we see that
\begin{equation}
 d_0^*(u^zw_1^{k-z}d^z)= (1-r_1^{k-z}r_2^z)x_z - 
 \frac{\phi_{z-1}}{r_1}x_{z-1}.
\end{equation}
Since $k\geq1$, both coefficients appearing in the above equation are non zero. 
Taking $z=0$ and applying $d_1^*$, we deduce $d_1^*(x_0)=0$ and by an 
inductive argument we obtain $d_1^*(x_z)=0$ for all $0\leq z\leq k$. As a 
consequence, $g_{z+1,k-z,z} = f_{z,k-z,z+1}$ for all $0\leq z\leq k$ and the 
result follows.
\end{proof}

Let $n$ and $m$ be the dimensions of the components of bidegree $(2k,0)$ of 
$\Ker(d_2^*)$ and $\im(d_1^*)$, respectively. It is straightforward to check 
that 
\[
 d_2^*(D^2U|u^aw_1^{k+1-a}d^{a+1} + DU^2|u^{a+1}w_1^{k+1-a}d^a)=0,
\]
for all $0\leq a\leq k+1$. We deduce that $n\geq k+2$. On the other hand, we 
know that $m\leq k+1$ by Lemma \ref{lemma:f_igual_g}. Also, Proposition 
\ref{prop:serie_hilbert_HH2} implies $n-m=1$ and it follows that $n=k+2$ and 
$m=k+1$. As a consequence, the homogeneous component of $\Ker(d_2^*)$ of 
bidegree $(2k,0)$ is the $\KK$-vector space spanned by the elements
\[
 D^2U|u^aw_1^{k+1-a}d^{a+1} + DU^2|u^{a+1}w_1^{k+1-a}d^a,
\]
for $0\leq a\leq k+1$.

Let us prove that the element $D^2U|w_1^{k+1}d + DU^2|uw_1^{k+1}$ does 
not belong to the image $\im(d_1^*)$.
By definition, $f_{z,k-z,z+1}$ is equal to 
$d_1^*(D|u^zw_1^{k-z}d^{z+1})$. We write $f_{z,k-z,z+1} = 
D^2U|\Delta_1(D|u^zw_1^{k-z}d^{z+1}) + 
DU^2|\Delta_2(D|u^zw_1^{k-z}d^{z+1})$. For $0\leq z\leq k$, Fact \ref{fact:comm} 
implies that
\begin{equation}
 \begin{split}
\label{eq:formula_f}
 \Delta_2(D|u^zw_1^{k-z}d^{z+1}) =& 
 \frac{\phi_{z}\phi_{z-1}}{r_1^2}u^zw_1^{k-z+2}d^{z-1} + 
 \frac{(r_1^{k-z}r_2^z-1)\alpha\phi_{z}}{r_1}u^{z+1}w_1^{k-z+1}d^z \\
 &+(r_1^{k-z}r_2^{z+1}-r_1)(r_1^{k-z}r_2^{z+1}-r_2)u^{z+2}w_1^{k-z}d^{z+1}.
 \end{split}
\end{equation}
Once more, the hypothesis of genericity implies that none of the coefficients 
appearing on the right hand side of the above equation annihilates.

Suppose $D^2U|w_1^{k+1}d + DU^2|uw_1^{k+1}$ belongs to $\im(d_1^*)$. By Lemma 
\ref{lemma:f_igual_g}, there 
exist $\lambda_0,\dots,\lambda_k\in \KK$ such that 
\[
 D^2U|w_1^{k+1}d + DU^2|uw_1^{k+1} = \sum_{z=0}^k\lambda_zf_{z,k-z,z+1}.
\]
Therefore, $uw_1^{k+1} = \sum_{z=0}^k 
\lambda_z\Delta_2(D|u^zw_1^{k-z}d^{z+1})$. Looking at 
the coefficient of $u^{k+2}d^{k+1}$ on the right hand side of the last 
equality, we deduce $\lambda_k=0$. Inductively in 
$z$ and looking at the coefficient of $u^{z+2}w_1^{k-z}d^{z+1}$, we deduce 
$\lambda_z=0$ for all $0\leq z\leq k$, which is a contradiction. 
Thus, the element $D^2U|w_1^{k+1}d + DU^2|uw_1^{k+1}$ does not belong to 
$\im(d_1^*)$. 
We have proven the following.

\begin{proposition}
\label{prop:cohh2}
 Let $k\geq 1$. The homogeneous component of $\Ker(d_2^*)$ of bidegree $(2k,0)$ 
is the $\KK$-vector space of dimension $k+2$ spanned by the set
\[
 \{D^2U|u^aw_1^{k+1}d^{a+1} + DU^2|u^{a+1}w_1^{k+1}d^a: 0\leq a\leq k+1\},
\]
 and the class in $HH^2(A)$ of the element $D^2U|w_1^{k+1}d + DU^2|uw_1^{k+1}$ 
generates the homogeneous component of bidegree $(2k,0)$.
\end{proposition}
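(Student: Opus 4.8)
The plan is to determine the bidegree-$(2k,0)$ component of $\Ker(d_2^*)$ not by solving its defining linear system head-on, but by squeezing two one-sided dimension bounds against the Euler-Poincar\'e characteristic computed in Proposition \ref{prop:serie_hilbert_HH2}; once the dimensions are pinned down exactly, the surviving cohomology class is isolated by a triangular elimination.

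First I would produce the obvious kernel elements. Applying $d_2^*$ as in \eqref{eq:d2*}, a direct check shows that each of the $k+2$ elements $D^2U|u^aw_1^{k+1-a}d^{a+1} + DU^2|u^{a+1}w_1^{k+1-a}d^a$ with $0\leq a\leq k+1$ lies in $\Ker(d_2^*)$. As they are supported on pairwise distinct basis vectors of $R^*\otimes A$, they are linearly independent, yielding the lower bound $n\geq k+2$ for $n=\dim\Ker(d_2^*)_{(2k,0)}$.

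Next I would bound the image. By Lemma \ref{lemma:f_igual_g}, the bidegree-$(2k,0)$ component of $\im(d_1^*)$ is generated by the $k+1$ elements $f_{z,k-z,z+1}$ with $0\leq z\leq k$, so $m\leq k+1$ for $m=\dim\im(d_1^*)_{(2k,0)}$. Proposition \ref{prop:serie_hilbert_HH2} shows the Hilbert series of $HH^2(A)$ has coefficient $1$ in bidegree $(2k,0)$ for every $k\geq1$, hence $n-m=1$. The bounds $n\geq k+2$, $m\leq k+1$, and $n-m=1$ then force $n=k+2$ and $m=k+1$; in particular the $k+2$ explicit elements above span $\Ker(d_2^*)_{(2k,0)}$, which is the first assertion.

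Finally, since $HH^2(A)_{(2k,0)}$ is one-dimensional, it suffices to check that the class of the $a=0$ element $D^2U|w_1^{k+1}d + DU^2|uw_1^{k+1}$ is nonzero, i.e. that it is not in $\im(d_1^*)$. If it were, Lemma \ref{lemma:f_igual_g} would write it as $\sum_{z=0}^k\lambda_z f_{z,k-z,z+1}$; comparing the $DU^2$-components and using the explicit formula \eqref{eq:formula_f} for $\Delta_2(D|u^zw_1^{k-z}d^{z+1})$, the top monomials $u^{z+2}w_1^{k-z}d^{z+1}$ occur with coefficients nonzero by genericity, forming a triangular system. Reading off $u^{k+2}d^{k+1}$ gives $\lambda_k=0$, and a downward induction on $z$ gives $\lambda_z=0$ for all $z$, contradicting the nonvanishing of the left-hand side. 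The main obstacle is precisely the central squeeze: computing $\Ker(d_2^*)$ or $\im(d_1^*)$ directly is unwieldy, so the argument depends on feeding the sharp exterior input of Proposition \ref{prop:serie_hilbert_HH2} into the crude bounds to make them exact; the nonvanishing of the named generator is then a routine, genericity-driven elimination.
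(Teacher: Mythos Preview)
Your proposal is correct and follows essentially the same route as the paper: exhibit the $k+2$ explicit kernel elements to get $n\geq k+2$, invoke Lemma~\ref{lemma:f_igual_g} for $m\leq k+1$, use Proposition~\ref{prop:serie_hilbert_HH2} to force $n-m=1$ and hence equality in both bounds, then show the $a=0$ element survives by the triangular elimination on the $DU^2$-component via formula~\eqref{eq:formula_f}. The only cosmetic difference is that you make the linear independence of the $k+2$ kernel elements explicit, which the paper leaves implicit.
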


Theorem \ref{thm:cohomology} follows from Propositions \ref{prop:cohh0},\ref{prop:cohh3}, \ref{prop:cohh1}, \ref{prop:serie_hilbert_HH2}, \ref{prop:primeros_grados} and \ref{prop:cohh2}.

% %%%%%%%%%%%%%%%%%%%%%%%%%%%%%%%%%%%%%%%%%%%%%%%%%%%%%%%%%%%%%%%%%%%%%%%%%%%%%%%%%%%%%%%%%%%%%%%%%%%%%%%%%%%%%%%%%%%%%%%%%%%%%%%%%%%%%%%%%%%%%%%%%%%%%
\bibliographystyle{model1-num-names}
\addcontentsline{toc}{section}{References}

%%%%%%%%%%%%%%%%%%%%%%%%%%%%%%%%%%%%%%%%%%%%%%%%%%%%%%%%%%%%%%%%%%%%%%%%%%%%%%%%%%%%%%%%%%%%%%%%%%%%%%%%%%%%%%%%%%%%%%%%%%%%%%%%%%%%%%%%%%%%%%%%%%%%%%%%%
\begin{bibdiv}
\begin{biblist}
\bib{Ba97}{article}{
   author={Bardzell, Michael J.},
   title={The alternating syzygy behavior of monomial algebras},
   journal={J. Algebra},
   volume={188},
   date={1997},
   number={1},
   pages={69--89},
%    issn={0021-8693},
%    review={\MR{1432347}},
%    doi={10.1006/jabr.1996.6813},
}

\bib{B}{article}{
   author={Benkart, Georgia},
   title={Down-up algebras and Witten's deformations of the universal
   enveloping algebra of $\germ s\germ l_2$},
   conference={
      title={Recent progress in algebra},
      address={Taejon/Seoul},
      date={1997},
   },
   book={
      series={Contemp. Math.},
      volume={224},
      publisher={Amer. Math. Soc., Providence, RI},
   },
   date={1999},
   pages={29--45},
%   review={\MR{1653061 (99m:17014)}},
%   doi={10.1090/conm/224/03190},
}

\bib{BR98}{article}{
   author={Benkart, Georgia},
   author={Roby, Tom},
   title={Down-up algebras},
   journal={J. Algebra},
   volume={209},
   date={1998},
   number={1},
   pages={305--344},
%   issn={0021-8693},
%   review={\MR{1652138 (2000e:06001a)}},
%   doi={10.1006/jabr.1998.7511},
} 

\bib{BW01}{article}{
   author={Benkart, Georgia},
   author={Witherspoon, Sarah},
   title={A Hopf structure for down-up algebras},
   journal={Math. Z.},
   volume={238},
   date={2001},
   number={3},
   pages={523--553},
%    issn={0025-5874},
%    review={\MR{1869697}},
%    doi={10.1007/s002090100265},
}

\bib{CL09}{article}{
   author={Carvalho, Paula A. A. B.},
   author={Lopes, Samuel A.},
   title={Automorphisms of generalized down-up algebras},
   journal={Comm. Algebra},
   volume={37},
   date={2009},
   number={5},
   pages={1622--1646},
%    issn={0092-7872},
%    review={\MR{2526326}},
%    doi={10.1080/00927870802209987},
}

\bib{CM00}{article}{
   author={Carvalho, Paula A. A. B.},
   author={Musson, Ian M.},
   title={Down-up algebras and their representation theory},
   journal={J. Algebra},
   volume={228},
   date={2000},
   number={1},
   pages={286--310},
%   issn={0021-8693},
%   review={\MR{1760966 (2001j:16042)}},
%   doi={10.1006/jabr.1999.8263},
}

\bib{CS04}{article}{
   author={Cassidy, Thomas},
   author={Shelton, Brad},
   title={Basic properties of generalized down-up algebras},
   journal={J. Algebra},
   volume={279},
   date={2004},
   number={1},
   pages={402--421},
%   issn={0021-8693},
%   review={\MR{2078408 (2005f:16051)}},
%   doi={10.1016/j.jalgebra.2004.05.009},
}

\bib{CS15}{article}{
   author={Chouhy, Sergio},
   author={Solotar, Andrea},
   title={Projective resolutions of associative algebras and ambiguities},
   journal={J. Algebra},
   volume={432},
   date={2015},
   pages={22--61},
%   issn={0021-8693},
%   review={\MR{3334140}},
%   doi={10.1016/j.jalgebra.2015.02.019},
}

\bib{Fo86}{article}{
   author={Fomin, S. V.},
   title={The generalized Robinson-Schensted-Knuth correspondence},
   language={Russian},
   journal={Zap. Nauchn. Sem. Leningrad. Otdel. Mat. Inst. Steklov.
   (LOMI)},
   volume={155},
   date={1986},
   number={Differentsialnaya Geometriya, Gruppy Li i Mekh. VIII},
   pages={156--175, 195},
%   issn={0373-2703},
   translation={
      journal={J. Soviet Math.},
      volume={41},
      date={1988},
      number={2},
      pages={979--991},
      issn={0090-4104},
   },
%   review={\MR{869582}},
%   doi={10.1007/BF01247093},
}

\bib{Go85}{article}{
   author={Goodwillie, Thomas},
   title={Cyclic homology, derivations and the free loop spaceBasic properties of generalized down-up algebras},
   journal={Topology},
   volume={24},
   date={2004},
   number={2},
   pages={187--215},
%   issn={0021-8693},
%   review={\MR{2078408 (2005f:16051)}},
%   doi={10.1016/j.jalgebra.2004.05.009},
}

\bib{HS12}{article}{
   author={Herscovich, Estanislao},
   author={Solotar, Andrea},
   title={Hochschild and cyclic homology of Yang-Mills algebras},
   journal={J. Reine Angew. Math.},
   volume={665},
   date={2012},
   pages={73--156},
%    issn={0075-4102},
%    review={\MR{2908741}},
%    doi={10.1515/CRELLE.2011.107},
}

\bib{Ig92}{article}{
   author={Igusa, K.},
   title={Cyclic homology and the determinant of the Cartan matrix},
   journal={J. Pure Appl. Algebra},
   volume={83},
   date={1992},
%   number={1},
   pages={101--119},
%   issn={0021-8693},
%   review={\MR{2078408 (2005f:16051)}},
%   doi={10.1016/j.jalgebra.2004.05.009},
}

\bib{KMP99}{article}{
   author={Kirkman, Ellen},
   author={Musson, Ian M.},
   author={Passman, D. S.},
   title={Noetherian down-up algebras},
   journal={Proc. Amer. Math. Soc.},
   volume={127},
   date={1999},
   number={11},
   pages={3161--3167},
%   issn={0002-9939},
%   review={\MR{1610796 (2000b:16042)}},
%   doi={10.1090/S0002-9939-99-04926-6},
}

\bib{Ku01}{article}{
   author={Kulkarni, Rajesh S.},
   title={Down-up algebras and their representations},
   journal={J. Algebra},
   volume={245},
   date={2001},
   number={2},
   pages={431--462},
%   issn={0021-8693},
%   review={\MR{1863888 (2002k:16061)}},
%   doi={10.1006/jabr.2001.8892},
}

\bib{SL16}{article}{
   author={Shen, Yuan},
   author={Lu, DiMing},
   title={Nakayama automorphisms of PBW deformations and Hopf actions},
   journal={Sci. China Math.},
   volume={59},
   date={2016},
   number={4},
   pages={661--672},
%    issn={1674-7283},
%    review={\MR{3474494}},
%    doi={10.1007/s11425-015-5077-2},
}

\bib{Sta88}{article}{
   author={Stanley, Richard P.},
   title={Differential posets},
   journal={J. Amer. Math. Soc.},
   volume={1},
   date={1988},
   number={4},
   pages={919--961},
%   issn={0894-0347},
%   review={\MR{941434}},
%   doi={10.2307/1990995},
}

\bib{Ta15}{article}{
   author={Tang, Xin},
   title={Algebra endomorphisms and derivations of some localized down-up
   algebras},
   journal={J. Algebra Appl.},
   volume={14},
   date={2015},
   number={3},
   pages={1550034, 14},
%    issn={0219-4988},
%    review={\MR{3275571}},
%    doi={10.1142/S0219498815500346},
}

\bib{Ter90}{article}{
   author={Terwilliger, Paul},
   title={The incidence algebra of a uniform poset},
   conference={
      title={Coding theory and design theory, Part I},
   },
   book={
      series={IMA Vol. Math. Appl.},
      volume={20},
      publisher={Springer, New York},
   },
   date={1990},
   pages={193--212},
%   review={\MR{1047881}},
%   doi={10.1007/978-1-4613-8994-1_15},
}

\bib{VP91}{article}{
   author={Vigu\'e-Poirrier, Micheline},
   title={Cyclic homology of algebraic hypersurfaces},
   journal={J. Pure Appl. Algebra},
   volume={72},
   date={1991},
   %number={1},
   pages={95--108},
%   issn={0021-8693},
%   review={\MR{2078408 (2005f:16051)}},
%   doi={10.1016/j.jalgebra.2004.05.009},
}

\bib{Zh99}{article}{
   author={Zhao, Kaiming},
   title={Centers of down-up algebras},
   journal={J. Algebra},
   volume={214},
   date={1999},
 %  number={1},
   pages={103--121},
 %  issn={0021-8693},
 %  review={\MR{1684900}},
 %  doi={10.1006/jabr.1998.7695},
}

\end{biblist}
\end{bibdiv}
%%%%%%%%%%% 

\footnotesize
\noindent S.C.:
\\ IMAS, UBA-CONICET,
Consejo Nacional de Investigaciones Cient\'\i cas y T\'ecnicas, \\
Ciudad Universitaria, Pabell\'on I, 1428 Buenos Aires, Argentina
\\{\tt schouhy@dm.uba.ar}

\medskip

\noindent E.H.:
\\
Institut Fourier, Universit\'e Grenoble Alpes, \\
100 rue des Maths, 38610 Gi\`eres, France;
\\Departamento de Matem\'atica, Facultad de Ciencias Exactas y Naturales, Universidad
de Buenos Aires,\\
Ciudad Universitaria, Pabell\'on I, 1428 Buenos Aires, Argentina; and \\
IMAS, UBA-CONICET,
Consejo Nacional de Investigaciones Cient\'\i ficas y T\'ecnicas, Argentina
\\{\tt Estanislao.Herscovich@univ-grenoble-alpes.fr}

\medskip

\noindent A.S.:
\\Departamento de Matem\'atica, Facultad de Ciencias Exactas y Naturales, Universidad
de Buenos Aires,\\
Ciudad Universitaria, Pabell\'on I, 1428 Buenos Aires, Argentina; and \\
IMAS, UBA-CONICET,
Consejo Nacional de Investigaciones Cient\'\i ficas y T\'ecnicas, Argentina
\\{\tt asolotar@dm.uba.ar}

\end{document}